\newtheorem{theorem}{Theorem}[section]
\newtheorem{corollary}[theorem]{Corollary}
\newtheorem{definition}[theorem]{Definition}
\newtheorem{First Proof}[theorem]{First Proof}
\newtheorem{fact}[theorem]{Fact}
\newtheorem{Second Proof}[theorem]{Second Proof}
\newtheorem{lemma}[theorem]{Lemma}
\newtheorem{proposition}[theorem]{Proposition}
\newtheorem{question}{Question}
\newtheorem{remark}[theorem]{Remark}
\begin{document}


\title{When is $A + x A =\mathbb{R}$}

\author{Jinhe Ye}
\address{Mathematical institute\\
  University of Oxford\\
  Oxford, Oxford ox2 6gg, UK}
\email{jinhe.ye@maths.ox.ac.uk}
\author{Liang Yu}
\address{School of mathematics\\
  Nanjing University\\
  Nanjing, Jiangsu 210093, People's Republic of China}
\email{yuliang.nju@gmail.com}
\author{Xuanheng Zhao}
\address{School of mathematics\\
  Nanjing University\\
  Nanjing, Jiangsu 210093, People's Republic of China}
\email{xuanheng21@gmail.com}

\subjclass[2020]{Primary 28A05; Secondary 03D80, 12L99}
\keywords{subgroups of the reals, Hausdorff dimension, the continuum hypothesis}

\begin{abstract}
For a subgroup $A$ of $(\mathbb{R},+)$ and a real $x$, define $A+xA=\{ a + x b : a, b \in A \}$ and $X_A=\{x\in \mathbb{R}:A + x A=\mathbb{R}\}$. We show that there is an $F_\sigma$ subgroup $A$ of $(\mathbb{R},+)$ such that $\mathrm{dim_H}  (A) \le \frac{1}{2}$ and $X_A \neq \emptyset$. However, if $A \subseteq \mathbb{R}$ is a subring of $\mathbb{R}$ and $X_A \neq \emptyset$, then $A =\mathbb{R}$. Moreover,
  assuming $\mathrm{CH}$ (the continuum hypothesis), there is a subgroup $A$ of $(\mathbb{R},+)$ with $\mathrm{dim_H}  (A) = 0$
  such that $X_A =\mathbb{R}\backslash \mathbb{Q}$.
 The proof of this theorem combines several techniques in recursion theory and algorithmic dimension. Several other theorems on analytic subgroups and subfields of the reals are presented. We also discuss some of these results in the $p$-adics.
\end{abstract}

\maketitle

\section{Introduction}
In this paper we prove several results concerning the ``sizes" of subgroups and subrings of the reals. Here by ``size", we typically refer to the Hausdorff measure and dimension of such objects. The story of ``sizes" has a rich history, which begins with a classical result in real analysis.

\begin{theorem}[Steinhaus \cite{Steinhaus1920}] Suppose $A \subseteq \mathbb{R}^n$ is Lebesgue measurable and has positive measure. Then the difference set $A-A:=\{x-y:x,y \in A\}$ contains a ball with positive radius whose center is at the origin.
\end{theorem}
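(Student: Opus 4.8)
The plan is to reduce to the case of finite measure and then exploit the continuity of the autocorrelation function of $A$. First I would observe that since $A$ has positive measure, by inner regularity (or simply by countable additivity over a grid of cubes) there is a bounded measurable subset $A_0 \subseteq A$ with $0 < |A_0| < \infty$; because $A_0 - A_0 \subseteq A - A$, it suffices to produce a ball inside $A_0 - A_0$, so from now on I assume $0 < |A| < \infty$.

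Next I would introduce the autocorrelation $f(x) = \int_{\mathbb{R}^n} \mathbf{1}_A(y)\,\mathbf{1}_A(x + y)\, dy$, i.e.\ the convolution of $\mathbf{1}_A$ with its reflection, evaluated at $x$. Two facts drive the argument: $f(0) = \int \mathbf{1}_A = |A| > 0$, and $f$ is continuous. Continuity follows from the continuity of translation in $L^2(\mathbb{R}^n)$: writing $f(x) = \langle \tau_x \mathbf{1}_A, \mathbf{1}_A\rangle$ with $\tau_x g(\cdot) = g(x + \cdot)$, the Cauchy--Schwarz bound $|f(x) - f(x')| \le \|\tau_x \mathbf{1}_A - \tau_{x'}\mathbf{1}_A\|_2\,\|\mathbf{1}_A\|_2$ tends to $0$ as $x \to x'$, since $\mathbf{1}_A \in L^2$ (as $|A| < \infty$) and translation acts continuously on $L^2$. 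By continuity there is $\delta > 0$ with $f(x) > 0$ for every $x$ in the ball $B(0, \delta)$.

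Finally I would unwind the meaning of $f(x) > 0$: if $f(x) > 0$ then the integrand $\mathbf{1}_A(y)\mathbf{1}_A(x+y)$ is nonzero on a set of positive measure, so some $y$ satisfies $y \in A$ and $x + y \in A$, whence $x = (x+y) - y \in A - A$. Therefore $B(0,\delta) \subseteq A - A$, which is exactly the desired ball centered at the origin.

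The only genuinely nontrivial input is the continuity of $f$, that is, the continuity of translation in $L^2$; the standard route approximates $\mathbf{1}_A$ in $L^2$ by a compactly supported continuous function, for which translation continuity is immediate by uniform continuity, and then passes to the limit using translation-invariance of the norm. An alternative, purely measure-theoretic proof bypasses convolutions entirely: take a Lebesgue density point $a$ of $A$, choose $\rho$ small enough that $|A \cap B(a,\rho)| > \tfrac34|B(a,\rho)|$, put $E = A\cap B(a,\rho)$, and note that for $|x|$ small both $E$ and $E+x$ lie in $B(a,\rho+|x|)$ while $|E| + |E+x| > \tfrac32|B(a,\rho)| > |B(a,\rho+|x|)|$; two subsets of a common set whose measures sum to more than that of the set must intersect, so $E \cap (E+x)\neq\emptyset$, giving $x \in A-A$ for all small $x$. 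In either approach the main obstacle is the same soft analytic fact — continuity of translation, or equivalently the Lebesgue density theorem — and the remainder is bookkeeping.
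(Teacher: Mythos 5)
The paper does not prove this statement at all: it is quoted as a classical result of Steinhaus with a citation to \cite{Steinhaus1920}, and only its corollary for measurable proper subgroups is used later. So there is no in-paper argument to compare against; what can be said is that your proof is correct and is in fact the standard one. The reduction to $0<|A|<\infty$ is fine, the convolution argument $f(x)=\int \mathbf{1}_A(y)\mathbf{1}_A(x+y)\,dy$ with $f(0)=|A|>0$ and continuity via continuity of translation in $L^2$ is airtight, and the final unwinding of $f(x)>0$ into $x\in A-A$ is exactly right. Your alternative density-point argument is also correct, and is arguably preferable here since it is self-contained modulo the Lebesgue density theorem and makes the radius of the ball explicit in terms of the density radius $\rho$; the one small point worth writing out is the volume comparison $\left(\tfrac{\rho+|x|}{\rho}\right)^n<\tfrac32$ for $|x|$ small, which is what makes the pigeonhole step work in dimension $n$. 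Either version would serve as a complete proof of the cited theorem.
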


The next corollary follows immediately.

\begin{corollary}\label{steinhaus}
If $A$ is a measurable proper subgroup of $(\mathbb{R},+)$, then $A$ is null.\footnote{We call a set $A \subseteq \mathbb{R}$ \textit{null} if it has Lebesgue measure zero.}
\end{corollary}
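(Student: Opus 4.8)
The plan is to argue by contradiction using Steinhaus's theorem. Suppose $A$ is a Lebesgue measurable proper subgroup of $\mathbb{R}$ that is \emph{not} null, i.e.\ $A$ has positive Lebesgue measure. Since $A = \bigcup_{N \in \mathbb{N}} (A \cap [-N,N])$ is a countable union, some bounded piece $A \cap [-N,N]$ must have positive (and now finite) measure; otherwise $A$ would be a countable union of null sets and hence null. This reduction lets me apply Steinhaus's theorem, with $n=1$, to the bounded measurable set $A \cap [-N,N]$, yielding an $\epsilon > 0$ with $(-\epsilon, \epsilon) \subseteq (A \cap [-N,N]) - (A \cap [-N,N])$.

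The next step exploits the group structure. Because $A$ is closed under subtraction, $(A \cap [-N,N]) - (A \cap [-N,N]) \subseteq A - A \subseteq A$, so combining with the previous step gives $(-\epsilon,\epsilon) \subseteq A$. Thus $A$ contains an entire neighborhood of the origin.

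Finally, I would show that any subgroup of $\mathbb{R}$ containing a neighborhood of $0$ must be all of $\mathbb{R}$: given an arbitrary $r \in \mathbb{R}$, choose a positive integer $n$ with $|r/n| < \epsilon$, so that $r/n \in (-\epsilon,\epsilon) \subseteq A$; since $A$ is closed under integer multiples, $r = n\cdot(r/n) \in A$. Hence $A = \mathbb{R}$, contradicting that $A$ is proper. There is essentially no genuine obstacle here; the only point requiring a moment's care is the initial passage from positive (possibly infinite) measure to a bounded subset of positive finite measure, so that Steinhaus's theorem applies as stated.
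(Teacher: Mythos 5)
Your proof is correct and is exactly the argument the paper has in mind: the paper gives no proof, stating only that the corollary ``follows immediately'' from Steinhaus's theorem, and your chain (positive measure $\Rightarrow$ $A-A$ contains an interval about $0$ $\Rightarrow$ $A \supseteq (-\epsilon,\epsilon)$ by closure under subtraction $\Rightarrow$ $A=\mathbb{R}$ by taking integer multiples) is the standard way to make that immediacy explicit. The extra reduction to a bounded piece of finite positive measure is a harmless and reasonable precaution.
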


Subsequently, Volkmann and Erd{\H o}s initiated the study of the dimension of subgroups/rings of the reals in the 1960s. In {\cite{1}} they showed that for each $\alpha \in [0,1]$, there is a Borel subgroup of
$(\mathbb{R},+)$ with Hausdorff dimension $\alpha$. Edgar and Miller {\cite{MR1948103}}, and independently Bourgain {\cite{MR1982147}} showed
that an \textit{analytic} (see the definition before Fact \ref{Jech}) subring of $\mathbb{R}$ either has Hausdorff dimension 0 or
is all of $\mathbb{R}$. Mauldin {\cite{MR3505676}} showed that
assuming $\mathrm{CH}$ (the continuum hypothesis), for each $\alpha$, $0 \leqslant \alpha \leqslant 1$, there is a subfield of $\mathbb{R}$ with Hausdorff dimension $\alpha$. Following the strategy of ``discretization'' used by Bourgain in {\cite{MR1982147}}, de Saxc\'e \cite{MR3606726} considered the problem in the setting of a connected simple real Lie group $G$ endowed with a Riemannian metric and showed that there is no Borel measurable dense subgroup of $G$ with Hausdorff dimension strictly between 0 and $\mathrm{dim_H}  (G)$. For a more detailed discussion on the early history, see {\cite{MR1948103}} p.1122.  The subgroups and subrings of the reals are also natural objects appearing in geometric measure theory, see \cite[Section 12.4]{MR3236784} for example.

In this paper we consider the following related question.

\begin{definition}
    Given a subgroup $A$ of $(\mathbb{R},+)$, define $$X_A=\{x\in \mathbb{R}:A+xA=\mathbb{R}\}.$$
\end{definition}

\begin{question}\label{q1}
  Suppose $A \subseteq (\mathbb{R},+,\cdot)$ is a subobject in some algebraic sense e.g. subgroup, subring or subfield. Is there an $x \in \mathbb{R}$ such that
 $$ A + x A := \{ a + x b : a, b \in A \} =\mathbb{R}? $$
  And if such an $x$ exists, what do we know about the size of $A$? What do we know about $X_A$?
\end{question}

We prove the following main results.

\begin{theorem}
  \label{2}
 There is an $F_\sigma$ subgroup $A$ of $(\mathbb{R},+)$ such that $\mathrm{dim_H}(A)\le\frac{1}{2}$ and $X_A \neq \emptyset$. 
\end{theorem}

\begin{theorem}\label{main}
    Assume $\mathrm{CH}$, holds:
  $2^{\aleph_0} = \aleph_1$. There is a group $A$ with $\mathrm{dim_H}  (A) = 0$
  such that 
 $X_A=\mathbb{R}\backslash \mathbb{Q}$.
\end{theorem}

Theorem \ref{2} is proved via a concrete construction of $A$ and $x \in X_A$. For Theorem \ref{main}, we construct $A$ using the concept of genericity and algorithmic dimension. We believe this method may be used to construct other exotic subsets of reals.

 We also consider what would happen if we restrict Question \ref{q1} to the analytic subgroups. By the Marstrand projection theorem,  if $A$ is an analytic subgroup of the reals and $\mathrm{dim_H}(A)>\frac{1}{2}$, then $X_A$ is co-null\footnote{We call a set $A \subseteq \mathbb{R}$ \textit{co-null} if its complement is null.} (Proposition \ref{36}). Our main results improve this proposition in terms of dimension. The restriction on analytic sets also enables us to use descriptive set theory. For example, if $A$ is an analytic subgroup of the reals and $x \in X_A$, then there is an $F_\sigma$ subgroup $B\subseteq A$ such that $x \in X_B$ (Theorem \ref{fsigma}).

For subrings, the condition is different. If $A$ is a subring of $\mathbb{R}$ and $X_A \neq \emptyset$, then $A =\mathbb{R}$ (Proposition \ref{1}). 
  
Besides Question \ref{q1}, we also study the maximal subfields of $\mathbb{R}$ without a given point because the proof techniques suggest so. Given a field $K$ and $a \in K$, a \textit{maximal subfield of $K$ without $a$} is a maximal (with respect to inclusion) element of $\{L \subseteq K: L \text{ is a subfield of } K \text{ and } a \notin L\}$. Such subfields have been well investigated in the literature. For example, Quigley \cite{MR137705} described the structure of $K / M$ in detail where $K$ is an algebraically closed field and $M$ is a maximal subfield of $K$ without a given point $a$, and used Galois theory to give existence proofs of $M$ more precise than that trivially given by Zorn's Lemma. We restrict our attention to the case that $K=\mathbb{R}$. We show that the usage of $\mathrm{AC}$ (the Axiom of Choice) is necessary for the existence of a maximal subfield of $\mathbb{R}$ without a given point (Corollary \ref{26}). On the other hand, by assuming CH, we construct a maximal subfield of $\mathbb{R}$ with Hausdorff dimension 0 such that some given point is not in its algebraic closure relative to $\mathbb{R}$ (Proposition \ref{47}).

The structure of the paper is as follows: Section \ref{s2} deals with subrings and subfields of $\mathbb{R}$. In Section \ref{s3} we discuss some simple results concerning the basic properties of subgroups and prove Theorem \ref{2}. In Section \ref{s4} we assume $\mathrm{CH}$ in the constructions and prove Theorem \ref{main}. We also show that $A$ in Theorem \ref{main} cannot be $F_{\sigma}$ (Proposition \ref{48}). Due to the variety of the tools used for the results and the independence of the methods, we postpone the introduction of notation and terminologies, and only recall them when being used.

\section{Subrings and Subfields}\label{s2}

Recall that an ordered
field $(R,+,\cdot,0,1,\le)$ ($\le$ is a total order on $F$; if $a \le b$ then $a+c\le b+c$; and if $0 \le a$ and $0 \le b$ then $0 \le a \cdot b$)  is \textit{real closed} if:

 (i) Any positive element has a square root in $R$, and

 (ii) Any polynomial equation $f  (x) = 0$ where $f  (x) \in R [x]$ is of odd
degree has a root in $R$.

\begin{fact}[Artin-Schreier, see {\cite[p.674]{MR1009787}}]
  \label{3}Let $C$ be an algebrically closed field and $F$ be a proper subfield of
  $C$ such that $C/F$ is finite. Then $F$ is real closed and
  $C = F \left ( \sqrt{- 1} \right)$.
\end{fact}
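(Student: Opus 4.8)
The plan is to run the classical Galois-theoretic argument, isolating the prime $2$ from every other prime; I write the proper subfield as $F$. First I would show that $C/F$ is Galois. Normality is automatic because $C$ is algebraically closed, so only separability needs checking, and here I use that $C$, being algebraically closed, is perfect: if $F_{\mathrm{s}}$ denotes the separable closure of $F$ in $C$, then $C/F_{\mathrm{s}}$ is purely inseparable, so in characteristic $p>0$ one has $C^{p^{k}}\subseteq F_{\mathrm{s}}$ for some $k$; but $C^{p^{k}}=C$, forcing $F_{\mathrm{s}}=C$. Thus $C/F$ is Galois; set $G=\mathrm{Gal}(C/F)$ and $n=[C:F]=|G|>1$.

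Next I reduce to prime degree. For a prime $\ell\mid n$, Cauchy's theorem gives $\sigma\in G$ of order $\ell$, and $E=C^{\langle\sigma\rangle}$ yields a cyclic extension $C/E$ of degree $\ell$ with $C$ still algebraically closed. I claim no odd $\ell$ occurs. If $\mathrm{char}\,F=\ell$ this is excluded by Artin--Schreier theory (a cyclic degree-$\ell$ extension in characteristic $\ell$ is of Artin--Schreier type, and an algebraically closed field admits none below it), so $\mathrm{char}\,F\neq\ell$; then $\zeta_{\ell}\in C$ with $[E(\zeta_{\ell}):E]$ dividing both $\ell-1$ and $\ell$, hence $\zeta_{\ell}\in E$, and Kummer theory gives $C=E(\alpha)$ with $\alpha^{\ell}=a\in E\setminus(E^{\ast})^{\ell}$. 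The crux is a norm computation: since $\sigma(\alpha)=\zeta_{\ell}\alpha$,
\[
  N_{C/E}(\alpha)=\prod_{i=0}^{\ell-1}\zeta_{\ell}^{\,i}\alpha=\zeta_{\ell}^{\ell(\ell-1)/2}a=a
\]
for odd $\ell$, because $\ell\mid\ell(\ell-1)/2$. As $C$ is algebraically closed we may write $\alpha=\beta^{\ell}$ with $\beta\in C$, so $a=N_{C/E}(\alpha)=N_{C/E}(\beta)^{\ell}\in(E^{\ast})^{\ell}$, a contradiction. The same Artin--Schreier input forbids $\mathrm{char}\,F=2$, so $n$ is a power of $2$ and $\mathrm{char}\,F\neq 2$. (The same computation yields $N_{C/E}(\alpha)=-a$ when $\ell=2$, which is exactly why this prime survives: it says $-a$ is a square and $C=E(\sqrt{-1})$.)

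I then pin the degree to $2$ and the characteristic to $0$. The key sublemma is that there is no field $E$ with $\sqrt{-1}\in E\subseteq C$ and $[C:E]=2$: if $C=E(\sqrt{d})$ with $d\notin(E^{\ast})^{2}$, then a square root of $\sqrt{d}$ in $C$ has the form $p+q\sqrt{d}$ with $p,q\in E$, and squaring forces $d=-1/(4q^{4})=(\sqrt{-1}/(2q^{2}))^{2}\in(E^{\ast})^{2}$, a contradiction. Applying this to $K=F(\sqrt{-1})$: the group $\mathrm{Gal}(C/K)$ is a $2$-group, and were it nontrivial it would have a subgroup of order $2$ whose fixed field is a forbidden $E$; hence $C=K=F(\sqrt{-1})$ and $[C:F]=2$. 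To rule out positive characteristic I use that the relative algebraic closure $\overline{\mathbb{F}_{p}}$ of the prime field is a characteristic, algebraically closed subfield of $C$ on which $G$ acts; since $\mathrm{Aut}(\overline{\mathbb{F}_{p}})\cong\hat{\mathbb{Z}}$ is torsion-free, the finite group $G$ acts trivially, so $\overline{\mathbb{F}_{p}}\subseteq F$ and $\sqrt{-1}\in F$, contradicting $[C:F]=2$. Hence $\mathrm{char}\,F=0$.

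Finally I would verify that $F$ is real closed. Axiom (ii) is immediate: every element of $C$ has degree dividing $[C:F]=2$ over $F$, so irreducible polynomials over $F$ have degree at most $2$, and any odd-degree polynomial therefore has a linear factor. For axiom (i) I produce an ordering: writing $\sqrt{x}=u+v\sqrt{-1}\in C=F(\sqrt{-1})$ shows every $x\in F$ is $\pm$ a square, while $-1\notin(F^{\ast})^{2}$ because $\sqrt{-1}\notin F$; the nonzero squares then form a positive cone, and positive elements, being squares, have square roots in $F$. I expect the main obstacle to be precisely the characteristic bookkeeping: making the Artin--Schreier exclusion rigorous and, for axiom (i), verifying that the squares are closed under addition---equivalently that $-1$ is not a sum of squares in $F$---which is the step where characteristic $0$ is genuinely used and where the asymmetry between $2$ and the other primes must be handled with care.
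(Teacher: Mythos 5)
The paper does not prove this statement at all --- it is imported as a black box with a citation to Jacobson --- so there is no in-paper argument to compare against; I am judging your proposal on its own. Your architecture is the standard classical proof (Galois-ness via perfectness of $C$, Kummer theory plus the norm computation to kill the odd primes, the $p+q\sqrt{d}$ trick to collapse the $2$-tower, the ordering by squares), and those parts are carried out correctly.

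The genuine gap is the step you label ``excluded by Artin--Schreier theory.'' The classification of cyclic degree-$p$ extensions in characteristic $p$ only tells you that $C=E(\alpha)$ with $\alpha^{p}-\alpha=a\in E\setminus\wp(E)$, where $\wp(x)=x^{p}-x$; it does \emph{not} tell you that such a configuration is impossible when $C$ is algebraically closed, and that impossibility is exactly the content you need (it is a step \emph{of} the theorem being proved, not a quotable prerequisite). Everything downstream depends on it: without it you cannot rule out $\operatorname{char}F=2$, so the Kummer argument for $\ell=2$ never starts, and your $\overline{\mathbb{F}_{p}}$/$\hat{\mathbb{Z}}$ argument cannot rescue you because it only bites after you already know $C=F(\sqrt{-1})$. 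The missing computation is short but must appear: since $C$ is algebraically closed there is $\xi\in C$ with $\wp(\xi)=a\alpha^{p-1}$; writing $\xi=\sum_{i=0}^{p-1}c_{i}\alpha^{i}$ with $c_{i}\in E$ and using $\alpha^{p}=\alpha+a$, the coefficient of $\alpha^{p-1}$ in $\wp(\xi)$ is $c_{p-1}^{p}-c_{p-1}$, so comparing coefficients in the basis $1,\alpha,\dots,\alpha^{p-1}$ gives $a=\wp(c_{p-1})\in\wp(E)$, a contradiction. Separately, the closure of the squares of $F$ under addition, which you defer, is the one-line norm identity $a^{2}+b^{2}=N_{C/F}(a+bi)=N_{C/F}\bigl((c+di)^{2}\bigr)=(c^{2}+d^{2})^{2}$; note also that your worry about ``where characteristic $0$ is genuinely used'' there is backwards --- this identity holds in any characteristic $\neq 2$, and it is formal reality that \emph{implies} characteristic $0$ (your $\hat{\mathbb{Z}}$ argument makes that implication unnecessary anyway).
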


\begin{fact}[``Lying-over'', see {\cite[p.411]{MR1009787}}]
  \label{4}Let $E$ be a commutative ring, $R$ a subring such that $E$ is
  integral over $R$ (for every element $\alpha$ of $E$, there is a monic polynomial $f(x) \in R[x]$ such that $f(\alpha)=0$). Then for each prime ideal $p$ of $R$, there is a prime ideal $P$ of $E$ such that $p = P \cap R$. In particular, if $E$ is a field, so is $R$.
\end{fact}

Let $p$ be a prime number. Let $a=\frac{b}{c}, b, c \in \mathbb{Z}$ be nonzero. We extract from $b$ and $c$ as high a power of the prime number $p$ as possible, namely choose $m$ an interger such that
$$a=p^m\frac{b'}{c'}, (b'c',p)=1,$$
and we put the \textit{$p$-adic absolute value} $|a|_p=\frac{1}{p^m}$. The field of \textit{$p$-adic numbers} $\mathbb{Q}_p$ is the completion of $\mathbb{Q}$ with respect to the metric $d_p(x,y)=|x-y|_p$.

The following lemma is presumably well-known.

\begin{lemma}\label{lem:finite}
(i) If $F$ is a subfield of $\mathbb{R}$ such that $\mathbb{R}/F$ is finite, then $F=\mathbb{R}$.

(ii) If $F$ is a subfield of $\mathbb{Q}_p$ such that $\mathbb{Q}_p/F$ is finite, then $F=\mathbb{Q}_p$.
\end{lemma}

\begin{proof}
(i) follows from Fact~\ref{3}. For (ii), consider the normal closure $K$ of $\mathbb{Q}_p/F$; $K$ is finite over $\mathbb{Q}_p$ and hence any automorphism of $K$ is continuous. Hence any automorphism of $K$ restricts to the identity on $\mathbb{Q}_p$, which implies that $\mathbb{Q}_p=F=K$.
\end{proof}

Recall that we define $X_A=\{x \in \mathbb{R}:A+xA=\mathbb{R}\}$.

\begin{proposition}
  \label{1} (i) If $A$ is a subring of $\mathbb{R}$ and $X_A \neq \emptyset$, then $A =\mathbb{R}$.

  (ii) If $A$ is a subring of $\mathbb{Q}_p$ and there
  is $x$ such that $A + x A =\mathbb{Q}_p$, then $A =\mathbb{Q}_p$.
\end{proposition}

\begin{proof}
  (i). By our assumption, $\mathbb{R}$ is finite over $A$
  and hence by Fact \ref{4}, $A$ is a field since $\mathbb{R}$ is a field.
  Then Lemma \ref{lem:finite} (i) implies $A =\mathbb{R}$.
  
  (ii).  By Fact~\ref{4} and our assumption, $A$ is a subfield of $\mathbb{Q}_p$. If $A\neq \mathbb{Q}_p$, then $[\mathbb{Q}_p:A]\neq 1$ is finite. Then Lemma \ref{lem:finite} (ii) implies $A =\mathbb{Q}_p$.
\end{proof}

Both $\mathbb{Q}_p$ and $\mathbb{R}$ are \textit{Polish} (separable completely metrizable) spaces. A subset $A$ of a Polish space $X$ is \textit{analytic} if it is the projection of a Borel set $B\subseteq X \times X$.

\begin{fact} [see {\cite[Theorem 11.18]{MR1940513}} ]\label{Jech}
Every analytic subset of $\mathbb{R}$ is measurable.
\end{fact}
The proof of this fact is general enough to work for other measures in Polish spaces. For example, in the following Proposition \ref{25}, we will use the fact that every analytic subset of $\mathbb{Q}_p$ is $\mu$-measurable where $\mu$ is the Haar measure on $\mathbb{Q}_p$.

Given a field $K$ and $a \in K$, a \textit{maximal subfield of $K$ without $a$} is a maximal (with respect to inclusion) element of $\{L \subseteq K: L \text{ is a subfield of } K \text{ and } a \notin L\}$.

\begin{proposition}\label{25}
Let $K=\mathbb{Q}_p$ or $\mathbb{R}$. Let $F$ be an analytic subfield of $K$ with some $x\in K\setminus F$. Then there is $y\in K\setminus F$ such that $x\notin F(y)$. Hence $F$ is not a maximal subfield of $K$ without $x$.
\end{proposition}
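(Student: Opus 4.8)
The plan is to produce a single $y \in K \setminus F$ that is transcendental over $F(x)$; any such $y$ witnesses the conclusion. Indeed, if $y$ is transcendental over $F(x)$ then $y \notin \overline{F(x)} \supseteq F$, so $y \notin F$; and if we had $x \in F(y)$, then writing $x = R(y)$ for a non-constant $R \in F(T)$ (non-constant since $x \notin F$) would exhibit $y$ as a root of $R(T) - x \in F(x)[T]$, contradicting the transcendence of $y$ over $F(x)$. Thus the whole statement reduces to showing that $K$ is \emph{not} algebraic over $F(x)$. Here I would first record that $F(x) = \{R(x) : R \in F(T)\}$ is again an analytic subfield of $K$ (a countable union, over the degrees of numerator and denominator, of projections of Borel sets built from $F$), and that it is a proper subfield. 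Properness follows from Lemma~\ref{lem:finite} together with the structure of $K$: if $F(x) = K$ and $x$ were algebraic over $F$ then $[K:F] < \infty$, forcing $F = K$ and contradicting $x \notin F$; while if $x$ were transcendental over $F$ then $K \cong F(T)$ as fields, which is impossible because $K$ is real closed (resp.\ because $\mathbb{Q}_p$ is not a rational function field) --- for instance $x^2+1$ is a positive element of $\mathbb{R}$ admitting no square root in any $F(x)$.

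The heart of the argument is the following claim, which I would isolate as a lemma: if $E$ is a proper analytic subfield of $K$, then $K$ is not algebraic over $E$. Suppose otherwise. For each $n$ set $A_n = \{y \in K : y^n + c_{n-1}y^{n-1} + \cdots + c_0 = 0 \text{ for some } c_0,\dots,c_{n-1} \in E\}$, the set of elements of degree at most $n$ over $E$ (every such element is a root of a monic degree-$n$ polynomial over $E$, obtained from its minimal polynomial times a power of $T$). Each $A_n$ is analytic, since the zero set is closed and $E^n$ is analytic, so $A_n$ is a projection of an analytic set; hence $A_n$ is Lebesgue measurable on $\mathbb{R}$ by Fact~\ref{Jech}, resp.\ Haar measurable on $\mathbb{Q}_p$. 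If $K$ were algebraic over $E$ then $K = \bigcup_n A_n$, so by countable additivity some $A_N$ has positive measure. By the Steinhaus theorem (in its Steinhaus--Weil form for the locally compact group $\mathbb{Q}_p$) the difference set $A_N - A_N$ then contains a neighbourhood of $0$. On the other hand, for $a,b \in A_N$ one has $[E(a-b):E] \le [E(a,b):E] \le N^2$, so $A_N - A_N \subseteq A_{N^2}$; thus $A_{N^2}$ contains a neighbourhood of $0$.

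Finally I would promote this neighbourhood to all of $K$ by scaling: every $r \in K$ can be written as $q^{-1}(qr)$ with $q \in \mathbb{Q}^\times \subseteq E$ and $qr$ lying in the given neighbourhood of $0$ (choose $q$ small in $\mathbb{R}$; choose a suitable power of $p$ in $\mathbb{Q}_p$, using $p^{-k}\mathbb{Z}_p \uparrow \mathbb{Q}_p$), and multiplication by the unit $q^{-1} \in E$ preserves degree over $E$ since $E(q^{-1}w) = E(w)$. Hence $A_{N^2} = K$, i.e.\ every element of $K$ has degree at most $N^2$ over $E$. A bounded-degree algebraic extension in characteristic $0$ is finite: taking $\alpha$ of maximal degree $d \le N^2$ and any $\beta \in K$, the primitive element theorem gives $E(\alpha,\beta) = E(\gamma)$, and $[E(\gamma):E] \le N^2$ is forced to equal $d$ by maximality, whence $\beta \in E(\alpha)$ and $K = E(\alpha)$ is finite over $E$. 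Then Lemma~\ref{lem:finite} yields $E = K$, contradicting that $E$ is proper. This proves the claim, and applying it to the proper analytic subfield $E = F(x)$ produces the desired transcendental $y$.

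The main obstacle I expect is precisely this measure-theoretic core. The two delicate points are, first, verifying that the $A_n$ are genuinely analytic (hence measurable), and running the Steinhaus step together with the scaling step uniformly for both $\mathbb{R}$ and $\mathbb{Q}_p$ --- where one must invoke the Steinhaus--Weil theorem and the ultrametric scaling that exhausts $\mathbb{Q}_p$ by dilations of a fixed ball; and second, the passage from ``every element has degree $\le N^2$ over $E$'' to ``$K/E$ is finite'', which is clean in characteristic $0$ via the primitive element theorem but must be stated carefully. I would also want to double-check the properness of $F(x)$ in the $p$-adic case, where the replacement for ``$K$ is not a simple transcendental extension'' is that $\mathbb{Q}_p$ is not isomorphic to a rational function field.
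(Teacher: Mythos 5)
Your proof is correct, and its measure-theoretic core is exactly the paper's: partition the algebraic elements into the analytic (hence measurable) sets of bounded degree, apply Steinhaus to one of positive measure, use closure of degree under differences, conclude bounded degree everywhere, and then invoke the primitive element theorem and Lemma~\ref{lem:finite} to get a contradiction. Where you differ is the reduction at the front. The paper argues that maximality would force $K/F$ to be algebraic --- if $x$ is transcendental over $F$ then $y=x^2$ already works, and if some $t\in K$ is transcendental over $F$ while $x$ is algebraic over $F$, then $x\notin F(t)$ since $F$ is algebraically closed in $F(t)$ --- and then runs the Steinhaus argument on $F$ itself. You instead run it on $E=F(x)$, isolating the cleaner and more general lemma that $K$ is never algebraic over a \emph{proper} analytic subfield, and extract a $y$ transcendental over $F(x)$. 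The price of your route is the two extra verifications that $F(x)$ is analytic (fine: countable union of continuous images of analytic sets) and proper. The properness check is the one genuinely soft spot: in the case where $x$ is transcendental over $F$ you appeal to ``$\mathbb{Q}_p$ is not a rational function field,'' which is true (e.g.\ $\mathbb{Q}_p^\times/(\mathbb{Q}_p^\times)^2$ is finite while $F(T)^\times/(F(T)^\times)^2$ is infinite) but is left unproved; the simplest repair is the paper's observation that in this case $y=x^2$ witnesses the conclusion outright, so the lemma is only ever needed when $x$ is algebraic over $F$, where properness of $F(x)$ is immediate from Lemma~\ref{lem:finite}. Your write-up is also more careful than the paper's at the step promoting ``$D_{n^2}$ contains a neighbourhood of $0$'' to ``$D_{n^2}=K$'' via scaling by rationals (respectively powers of $p$), which the paper elides.
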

\begin{proof}
    
     Assume otherwise, then $F$ is a maximal subfield of $K$ avoiding $x$. This implies that $K/F$ is algebraic. Indeed, if $x$ is not algebraic over $F$, then $x \notin F(x^2)$. 
     Now if $K/F$ is not algebraic, then $K$ contains a copy of $F(X)$ for $X$ transcendental over $F$,  which does not contain $x$. 

     So each $y\in K\setminus F$ satisfies a polynomial over $F$. Let $D_n$ denote the set of elements in $K$ whose minimal polynomial over $F$ is of degree at most $n$, this is an analytic set and $\bigcup_{n\in \mathbb{N}}D_n=K$. In particular, $D_n$ is measurable by Fact~\ref{Jech}. It is clear that $D_n$ is closed under multiplication by $-1$. Note further that the difference of 2 elements of degree at most $n$ over $F$ has degree at most $n^2$. Moreover, there must be $n$ such that $\mu(D_n)>0$ where $\mu$ denote the Haar measure on $K$. So $D_{n^2}=K$ by the Steinhaus theorem for Haar measure on locally compact groups (see \cite{MR308368}). This means that any element in $K/F$ is of degree at most $n$ for some $n\in\mathbb{N}$.  By the primitive element theorem, we have that $[K:F]$ is at most $n$. Then by Lemma~\ref{lem:finite}, $K=F$, a contradiction.
\end{proof}

The assumption that $F$ is analytic is used to guarantee that $D_n$ is measurable, so that we can use the Steinhaus theorem. If $D_n$'s are measurable a priori, then the proof will go through as is.
It follows from $\mathrm{AC}$ that there is a non-measurable set. Under some weaker set theoretic axioms, the situation will be different. We use $\mathrm{DC}$ to denote \textit{the axiom of dependent choice}, that is: suppose $R$ is a relation on a nonempty set $X$; if for every $a \in X$, there is an $b \in X$ such that $a R b$, then there is a sequence $\{ x_n\}_{n \in \omega}$ in $X$ such that $x_n R  x_{n+1}$ for all $n \in \omega$. $\mathrm{DC}$ follows from $\mathrm{AC}$ trivially. Solovay \cite{MR0265151} proved that $\mathrm{ZF}+\mathrm{DC}+\mbox{\text{Every subset of $\mathbb{R}$ is measurable}}$ is consistent.

By the above discussion and the proof of Proposition \ref{25}, we have the following conclusion: without $\mathrm{AC}$, one cannot construct a maximal subfield of $\mathbb{R}$ without a given point.

\begin{corollary}\label{26}
Assume $\mathrm{ZF}+\mathrm{DC}+\textup{Every subset of $\mathbb{R}$ is measurable}$. For any subfield $F$ of $\mathbb{R}$ and $x\in \mathbb{R}\setminus F$, $F$ is not a maximal subfield of $K$ without $x$.
\end{corollary}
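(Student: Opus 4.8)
The plan is to argue by contradiction and simply re-run the argument of Proposition~\ref{25}, using the hypothesis that every set of reals is measurable in place of analyticity. As the remark following Proposition~\ref{25} already observes, the only role played by analyticity there was to guarantee that the sets $D_n$ are measurable so that Steinhaus' theorem applies; under the Solovay hypothesis this measurability is automatic for \emph{every} set of reals, so the same proof covers an arbitrary subfield rather than only an analytic one.

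Concretely, suppose toward a contradiction that a maximal subfield $L$ of $\mathbb{R}$ with $L \supseteq F$ and $x \notin L$ exists. The field $F$ then plays no further role; everything is about $L$. First I would show, exactly as in Proposition~\ref{25}, that $\mathbb{R}/L$ is algebraic: if $x$ were transcendental over $L$ then $x \notin L(x^2)$, violating maximality, and if $\mathbb{R}/L$ were not algebraic then $\mathbb{R}$ would contain a copy of $L(X)$ with $X$ transcendental, a proper extension of $L$ still avoiding $x$. Next, set $D_n = \{ z \in \mathbb{R} : z \text{ is algebraic over } L \text{ of degree} \leq n \}$, so that $\bigcup_{n} D_n = \mathbb{R}$. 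By the hypothesis every $D_n$ is Lebesgue measurable, and by countable additivity (available under $\mathrm{DC}$) some $D_n$ has positive measure. Steinhaus' theorem then places a neighborhood of $0$ inside $D_n - D_n \subseteq D_{n^2}$, and since $\mathbb{Q} \subseteq L$ one may scale an arbitrary real into this neighborhood to conclude $D_{n^2} = \mathbb{R}$. Hence every real has degree at most $n^2$ over $L$, so the primitive element theorem (all extensions are separable in characteristic $0$) gives $[\mathbb{R} : L] \leq n^2 < \infty$; Lemma~\ref{lem:finite} then forces $L = \mathbb{R}$, contradicting $x \notin L$.

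The only genuine content beyond this bookkeeping is checking that the measure-theoretic tools survive in the choiceless setting $\mathrm{ZF}+\mathrm{DC}$: countable additivity of Lebesgue measure, its inner and outer regularity, and therefore Steinhaus' theorem are all provable from $\mathrm{ZF}+\mathrm{DC}$, so no instance of full $\mathrm{AC}$ is smuggled into the analysis. This is exactly the conceptual contrast with Quigley's construction: Zorn's lemma would manufacture a maximal avoiding-field, but under the Solovay hypothesis any such field is forced to be measurable, and measurability is precisely what the degree-plus-Steinhaus argument rules out.
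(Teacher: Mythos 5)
Your proposal is correct and follows essentially the same route as the paper: the paper's proof of Corollary~\ref{26} is precisely to rerun the argument of Proposition~\ref{25}, observing (as in the remark preceding the corollary) that analyticity was only used to make the sets $D_n$ measurable, which is now automatic. Your additional care in applying the argument to the putative maximal field $L$ rather than $F$, and in noting that Steinhaus and countable additivity survive in $\mathrm{ZF}+\mathrm{DC}$, matches the intended reading.
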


 \section{Subgroups}\label{s3}

We recall some terminology and notation from {\cite{MR3236784}}. Let $A \subseteq
X$, where $(X,d_X)$ is a metric space. Let the diameter of $A$, written $\left| A \right|$, be the supremum of
the distances between any two points in $A$, i.e. $\left| A \right| = \sup_{x, y \in A} d_X(x,y)$. Suppose $s \ge 0$.
For each ${\ensuremath{\delta}}> 0$, we define the \textit{$s$-dimensional Hausdorff measure} of $A$ by
\[ \mathcal{H}^s  (A) = \lim_{\delta \rightarrow 0}\mathcal{H}^s_{\delta}  (A) = \lim_{\delta \rightarrow 0}\inf \left\{ \sum_{i = 1}^{\infty} \left| U_i \right|^s :
   \{ U_i \} \mbox{ is } \text{a}\mbox{ cover} \mbox{ of } A
   , 0 < \left|U_i\right| \leqslant \delta, \forall i\right\} . \]
 $\mathcal{H}^s  (A)$ can be  (and usually is) 0 or $\infty$.
There is a
critical value of $s$ at which $\mathcal{H}^s  (A)$ ``jumps'' from ${\infty}$ to 0.
This critical value is called the \textit{Hausdorff dimension} of $A$, written
$\mathrm{dim_H}  (A)$. In other words,
\[
\mathrm{dim_H}(A)=\inf\{s:\mathcal{H}^s(A)=0\}=\sup\{s: \mathcal{H}^s(A)=\infty\}.
\]

We gather some of the basic properties of subgroups of the reals. Recall that we define $X_A=\{x \in \mathbb{R}:A+xA=\mathbb{R}\}$.

\begin{proposition} 
  \label{5} Suppose that $A$ is a subgroup of $\mathbb{R}$.

   (i) For all $x \in \mathbb{R}$, $A+xA$ is a subgroup of $\mathbb{R}$.
   
   (ii) If $p \in \mathbb{Q}$ and $p \in X_A$, then $A =\mathbb{R}$.
  
   (iii) If $k \in \mathbb{N}$, $p_0, \ldots, p_k \in \mathbb{Q}$ and $A +
  \sum_{i = 0}^k p_i A = \mathbb{R}$, then $A =\mathbb{R}$.
  
   (iv) Define the tensor product \[\mathbb{Q}\otimes_{\mathbb{Z}} A=\left\{ \sum_{i = 0}^k p_i a_i : k \in
  \mathbb{N}, p_i \in \mathbb{Q}, a_i \in A \mbox{ for } i = 0, \ldots, k
  \right\}.\]
  Then $\mathrm{dim_H} \left ( \mathbb{Q}\otimes_{\mathbb{Z}} A \right) = \mathrm{dim_H}  (A)$.

  (v) If $x \in X_A$, then for all $m \in \mathbb{Z}-\{0\}$, $mx\in X_A$.

  (vi) If $x \in X_A$, then for all $m, n \in \mathbb{Z}-\{0\}$, $x+ \frac{n}{m}\in X_A$.
\end{proposition}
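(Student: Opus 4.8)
The plan is to dispatch the four parts in order, treating the first three as short module-theoretic computations and reserving the real work for the geometric statement (iv).

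For (i) I would verify the subgroup axioms directly: $0 = 0 + x\cdot 0 \in A + xA$, and for $a_1 + x b_1,\, a_2 + x b_2 \in A + xA$ their difference is $(a_1 - a_2) + x(b_1 - b_2) \in A + xA$ since $A$ is closed under subtraction; nonemptiness together with closure under subtraction gives a subgroup. Parts (ii) and (iii) both rest on the observation that an additive subgroup $A \subseteq \mathbb{R}$ is automatically a $\mathbb{Z}$-module, so $mA \subseteq A$ for all $m \in \mathbb{Z}$ and $A + A = A$. For (iii), which contains (ii) as the case $k=0$, I would clear denominators: choosing $N \in \mathbb{N}$ so that $N p_i \in \mathbb{Z}$ for every $i$, one has
\[
N\Big(A + \sum_{i=0}^k p_i A\Big) = NA + \sum_{i=0}^k (N p_i) A \subseteq A
\]
because all coefficients are now integers and $A$ is closed under integer scaling and addition. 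Since the left-hand side equals $N\mathbb{R} = \mathbb{R}$ (as $N \neq 0$), we get $\mathbb{R} \subseteq A$, hence $A = \mathbb{R}$.

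The content of the proposition is (iv). One inequality, $\mathrm{dim_H}(A) \le \mathrm{dim_H}(\mathbb{Q}\otimes_{\mathbb{Z}} A)$, is immediate from monotonicity of Hausdorff dimension together with $A \subseteq \mathbb{Q}\otimes_{\mathbb{Z}} A$. For the reverse inequality the key algebraic step is the identity
\[
\mathbb{Q}\otimes_{\mathbb{Z}} A = \bigcup_{N=1}^{\infty} \tfrac{1}{N}\, A .
\]
Indeed, given a finite combination $\sum_{i=0}^k p_i a_i$ with $p_i \in \mathbb{Q}$, clearing denominators by a common $N$ rewrites it as $\tfrac{1}{N}\sum_{i} (N p_i) a_i$, whose inner sum is an integer combination of elements of $A$ and therefore lies in $A$; conversely each $\tfrac{1}{N} A$ is obviously contained in the $\mathbb{Q}$-span. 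I would then combine this with two standard facts about Hausdorff dimension from Falconer \cite{MR3236784}: scaling invariance, so that $\mathrm{dim_H}(\tfrac{1}{N} A) = \mathrm{dim_H}(A)$ because $t \mapsto t/N$ is a similarity, and countable stability, $\mathrm{dim_H}(\bigcup_n E_n) = \sup_n \mathrm{dim_H}(E_n)$. These give $\mathrm{dim_H}(\mathbb{Q}\otimes_{\mathbb{Z}} A) = \sup_{N \ge 1} \mathrm{dim_H}(\tfrac{1}{N} A) = \mathrm{dim_H}(A)$.

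The only genuinely delicate point, and thus the main obstacle, is (iv): one must recognize that passing from $A$ to its full $\mathbb{Q}$-span enlarges it only by a countable union of rescaled copies, rather than by something dimensionally larger, and then rely on the countable stability of Hausdorff dimension, which is precisely the feature distinguishing it from Hausdorff measure or box-counting dimension and making the equality hold. Everything preceding it is a denominator-clearing exercise exploiting that subgroups of $\mathbb{R}$ are $\mathbb{Z}$-modules.
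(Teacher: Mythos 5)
Your proposal is correct and follows essentially the same route as the paper: parts (i)--(iii) by clearing denominators using that $A$ is a $\mathbb{Z}$-module, and part (iv) by writing $\mathbb{Q}\otimes_{\mathbb{Z}}A$ as the countable union $\bigcup_{N\ge 1}\tfrac{1}{N}A$ and invoking bi-Lipschitz invariance together with countable stability of Hausdorff dimension.
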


\begin{proof}
(i). By the definition of a subgroup.

   (ii). Suppose that $p = \frac{m}{n}$ for some integers $m$ and $n \neq 0$.
  Then for any $c \in \mathbb{R}$, there are $a, b \in A$ such that $a +
  \frac{m b}{n} = c$. So $n c = n a + m b \in A$. Thus $\mathbb{R}= \frac{1}{n} A$. Since $\mathrm{Char}(\mathbb{R})=0$, we have $\mathbb{R}=n\mathbb{R}=A$.
  
   (iii). Suppose that $p_i = \frac{m_i}{n_i}$ for some integers $m_i$ and $n_i
  \neq 0$. Let $n$ be the least common multiple of those $n_i$. Then $\mathbb{R}=A +
  \sum_{i = 0}^k p_i A \subseteq \frac{1}{n}A$, so $A =\mathbb{R}$.
  
   (iv). Note that $\mathbb{Q}\otimes_\mathbb{Z}A \subseteq \bigcup_{n \ge 1}\frac{1}{n}A$.
  But it is clear that $\mathrm{dim_H}  (A) = \mathrm{dim_H}  (\frac{1}{n} A)$ for
  any $n \ge 1$ since $f:\mathbb{R} \to \mathbb{R},x\mapsto nx$ is bi-Lipschitz. So
  \[ \mathrm{dim_H}  (A) \leqslant \mathrm{dim_H} \left ( \mathbb{Q}\otimes_\mathbb{Z}A \right) \leqslant \mathrm{dim_H} \left ( \bigcup_{n \ge 1} \frac{1}{n} A
    \right) = \sup_i \mathrm{dim_H}(\frac{1}{n} A) = \mathrm{dim_H}  (A).\]

    (v). Note that $$A+mxA=m(\frac{A}{m}+x A)\supseteq m(A+xA)=m \mathbb{R}.$$
  Since $\mathrm{Char}(\mathbb{R})=0$, we have $A+mxA= \mathbb{R}$.

   (vi). Note that
   
   $$A+(x+\frac{n}{m})A=A+(m x+n) \frac{A}{m} \supseteq A+(mx+n)A = A+mxA.$$
   By (v), we have $A+(x+\frac{n}{m})A= \mathbb{R}$.
\end{proof}
\begin{remark}
    Let $X$ be a Hamel basis of $\mathbb{R}$ over 
$\mathbb{Q}$ with $\mathrm{dim_H}  (X)=0$  (such a basis exists by Lutz, Qi, and Yu \cite{MR4766390}), and let $A$ be the group generated by $X$. then by Proposition
\ref{5}(iv),
\[ \mathrm{dim_H}  (A) = \mathrm{dim_H}  (\mathbb{R}) = 1 > \mathrm{dim_H}  (X). \]
Hence the additive group generated by a basis $X$ may have Hausdorff dimension greater than $\mathrm{dim_H}  (X)$.
\end{remark}

We can easily deduce from the Marstrand projection theorem: if $A$ is ``regular'' and ``large'' enough, then there is $x$ such that $A+xA=\mathbb{R}$.

 \begin{fact}[Marstrand \cite{MR0063439}] \label{fact_marstrand}
     Let $F \subseteq \mathbb{R}^2$ be an analytic set with $\mathrm{dim_H}(F)>1$. Then for almost all $\theta \in (-\frac{\pi}{2},\frac{\pi}{2})$, $\mathrm{proj_\theta}(F)$ (the projection of $F$ on the line $l_\theta:y=\mathrm{tan}\theta \cdot x$) has positive length.
 \end{fact}

 \begin{proposition}\label{36}
     If $A$ is an analytic subgroup of $(\mathbb{R},+)$ with $\mathrm{dim_H}(A)>\frac{1}{2}$, then $X_A$ is co-null.
 \end{proposition}

 This is a minor generalization of \cite[Exercise 6.8]{MR3236784}, which implies that for almost all $x$, $A+xA$ has Hausdorff dimension 1.

\begin{proof}
    By the product formula of Hausdorff dimension (see \cite[Chapter 7]{MR3236784}), $$\mathrm{dim_H}(A \times A) \ge\mathrm{dim_H}(A)+\mathrm{dim_H}(A)>1.$$
    Hence for almost all $\theta \in (-\frac{\pi}{2},\frac{\pi}{2})$, $\mathrm{proj_\theta}(A\times A)$ has positive length. Let $f(t)=\arctan t$ for all $t\in \mathbb{R}$. Define $\Theta=\{\theta \in (-\frac{\pi}{2},\frac{\pi}{2}):\mathrm{proj_\theta}(A\times A) \text{ has positive length}\}$. Since $\Theta$ is co-null (and hence measurable), so $f^{-1}(\Theta)$ is measurable. We claim that $f^{-1}(\Theta)$ is co-null. Suppose not, then $\mathbb{R} \backslash f^{-1}(\Theta)$ has positive measure. Then there is $N \in \mathbb{R}$ such that $\mathbb{R} \backslash f^{-1}(\Theta) \cap [-N,N]$ has positive measure. But $\inf_{t\in [-N,N]}f'(t)>0$, then $f(f^{-1}(\Theta) \cap [-N,N])$ has positive measure, a contradiction.
    
    Suppose $(a,b)$, $(c,d) \in A \times A$, $a \neq c$ and $b \neq d$. Then the line passing through these two points is:
    $$l_{(a,b),(c,d)}:\frac{y-d}{x-c}=\frac{y-b}{x-a}.$$
    The slope of $l_{(a,b),(c,d)}$ is $\frac{b-d}{a-c}$. And if $t\neq 0$ and $a+tb=c+td$, then $t=-\frac{a-c}{b-d}$. So for $(a,b),(c,d)$ and $t$ as above, $a+tb=c+td$ if and only if $l_{(a,b),(c,d)}$ is perpendicular to $l_{\arctan t}: y= t\cdot x$. In particular, $l_{\arctan t}$ and $A+tA$ are homeomorphic via:
    $$(a,b) \mapsto a+bt.$$
    And $$|a+bt|=(1+t^2)|a|=\sqrt{1+t^2}\sqrt{a^2+b^2}=\sqrt{1+t^2}|\overrightarrow{0,(a,b)}|.$$
    Hence the measure of $A+t A$ is equal to the measure of
    $\mathrm{proj_{\arctan t}}(A \times A)$ times $\sqrt{1+t^2}$ provided $t \neq 0$. By Proposition \ref{5}(i), $A+tA$ is a group. Then by Corollary \ref{steinhaus}, $A+tA= \mathbb{R}$ if $\mathrm{proj}_{\arctan t}(A\times A)$ has positive length. Since $f^{-1}(\Theta)$ is co-null, $\mathrm{proj}_{\arctan t}(A\times A)$ has positive length for almost all $t$.
\end{proof}

Next, we prove Theorem \ref{2}, which refines Proposition \ref{36}.
\begin{proof}[Proof of Theorem \ref{2}]
  As a convention, we consider a real $a$ and its binary expansion $m + 0. a_1 a_2 \cdots a_n \cdots$ with infinitely many 0s to be equivalent, where $m \in \mathbb{Z}$ is the integer part of $a$ and
  the numbers $a_i \in \{0, 1\}$ express its binary decimal part.

  We first proof a weaker result: there is an $F_\sigma$ subgroup $A$ of $(\mathbb{R},+)$ and $x \in \mathbb{R}$ such that $A + x A =\mathbb{R}$ and $A$ is null. 
  
  Define $Q_n =\{m \in \mathbb{N}: 3^{n - 1} \leqslant m <
  3^n \} $ for $n \ge 1$. Then each $Q_n$ contains $2 \cdot 3^{n - 1}$
  numbers. Divide $\{1, 2, . . .\}$ into two parts $P_1, P_2$ such that for all $n
  \ge 1$, the first $3^{n - 1}$ numbers of $Q_n$ are in $P_1$, and the
  other numbers of $Q_n$ are in $P_2$. Formally,
  \[ P_1 = \{ k : \exists n \exists i \in [0,3^n)  (k = 3^n + i) \}, \]
  \[ P_2 = \{ k : \exists n \exists i \in [3^n, 2 \cdot 3^n)  (k = 3^n + i) \} . \]
  Define
  \[ A_0 = \{ m + 0. a_1 a_2 \cdot \cdot \cdot a_n \cdots
      : m \in \mathbb{Z} \wedge \forall n \in P_2  (a_n =
     0) \} . \]
$A_0$ consists of reals with segments of 0s of length $3^k  (k \in \mathbb{N})$, and
  ahead of each segment of 0s there is an arbitrary segment of the same
  length. Define for each $n$, 
\[ A_n = \overbrace{A_0 \pm A_0 \pm \cdots \pm A_0}^n = \{ \alpha_1 \pm
     \alpha_2 \pm \cdots \pm \alpha_n : \alpha_i \in A_0, 1 \leqslant i
     \leqslant n \} . \]
Fix $n \ge 1$ and $\varepsilon >0$. Choose $k \ge 1$ such
  that 
  
  (i) $2^{- \frac{1}{2} \cdot 3^k + 1} < \varepsilon$; and
 
  (ii) for all $a = m +
  0. a_1 a_2 \cdots \in A_n$,
 
 $$\forall j \in \left[ 2 \cdot 3^{k -
  1}, \frac{5}{2} \cdot 3^{k - 1} \right](a_j=0) \lor \forall j \in \left[ 2 \cdot 3^{k -
  1}, \frac{5}{2} \cdot 3^{k - 1} \right](a_j=1).$$
  Then the measure of $A_n$ is no more than $2 \cdot 2^{- \frac{1}{2} \cdot
  3^k} < \varepsilon$. Since $\varepsilon$ is arbitrary, $A_n$ is null. Let $A$ be the subgroup of $(\mathbb{R},+)$ generated from $A_0$. Then $A = \bigcup_{n
  \ge 1} A_n$ is null. Since $A_0$ is compact, $A$ is $F_{\sigma}$. Define 
  $x = 0. x_1 x_2 \cdots$ such that $x_n = 1$ if and only if $\exists k(n = 3^k)$. Given any real $y = m + 0. y_1 y_2 \cdots$, we shall define a real $b = 0.
  b_1 b_2 \cdots \in A_0$ such that $b \cdot x$ and $y$ are equal on the
  $n$th binary place for each $n \in P_2$. Then we can pick $c
  \in A_0$ such that $c + b x = y$, so $A + x A =\mathbb{R}$.
  
  Intuitively, in the calculation $b \cdot x$, $x_{3^k} = 1$ causes the
  binary places of $b$ to shift to the right $3^k$ places, i.e.
  \[ 0. \overbrace{0 \cdots 0}^{3^k - 1} 1 \times 0. b_1 b_2 \cdots = 0.
     \overbrace{0 \cdots 0}^{3^k} b_1 b_2 \cdots . \]
  Denote $b x = 0. c_1 c_2 \cdots .$ By recursion on $n$, one can choose $\{b_m
  : m \in Q_n \cap P_1 \}$ such that $c_m = y_m$ for all $m \in Q_n \cap P_2$
  as follows: suppose $\{b_m : m \in Q_k \cap P_1, k < n\}$ have been chosen so that
  $c_m = y_m$ for all $k < n$ and $m \in Q_k \cap P_2$. Whatever $\{b_m :
  m \in Q_k \cap P_1, k \ge n + 1\}$ are chosen, we have
  \[ 0. \overbrace{0 \cdots 0}^{3^n - 1} b_{3^n} b_{3^n + 1} b_{3^n + 2}
     \cdots \times x < 0. \overbrace{0 \cdots 0}^{3^n-1} 1. \]
  Hence $\{c_m : m \in Q_n \cap P_2 \}$ is determined only by $\{b_m : m \in
  Q_k \cap P_1, k \leqslant n\}$. Now it is clear that we can choose $\{b_m : m \in Q_n \cap P_1 \}$ to
  ensure $c_m = y_m$ for all $m \in Q_n \cap P_2$. This ends the proof of the weaker result.

  It needs some more effort to make $\mathrm{dim_H}  (A) = \frac{1}{2}$. Let $\{p_n\}_{n \ge 1}$ be an increasing sequence of even numbers such that $\sum_{k<n}p_k=o(p_n)$ and $3^n=o(p_n)$. Define
  $$Q_n=\{m \in \mathbb{N}:1+ \sum_{k<n}p_k \le m <1+ \sum_{k \le n}p_k\}.$$
  Divide $\{1, 2, . . .\}$ into two parts $P_1, P_2$ such that for all $n
  \ge 1$, the first half of the numbers of $Q_n$ are in $P_1$, and the
  other numbers of $Q_n$ are in $P_2$. Define
  \[ A_0 = \{ m + 0. a_1 a_2 \cdot \cdot \cdot a_n \cdots
      : m \in \mathbb{Z} \wedge \forall n \in P_2  (a_n =
     0) \} . \]
 Define $A_n = \overbrace{A_0 \pm A_0 \pm \cdots \pm A_0}^n$ and $A = \bigcup_{n
  \ge 1} A_n$. As before, $A$ is an $F_\sigma$ subgroup of $(\mathbb{R},+)$ and there is $x$ such that $A+x A=\mathbb{R}$. It remains to show that $\mathrm{dim_H}(A_n)=\frac{1}{2}$ for all $n$. Fix $n$. For all $k>n$ and $a=m+0.a_1a_2 \cdots \in A_n$,
  $$\forall j \in \left( \sum_{j<k}p_j+\frac{p_k}{2}, \sum_{j \le k}p_j-n \right](a_j=0) \lor \forall j \in \left( \sum_{j<k}p_j+\frac{p_k}{2}, \sum_{j \le k}p_j-n \right](a_j=1).$$
Let $\delta=2^{-\sum_{j \le k}p_j}$. If $s >\frac{1}{2}$, then 
$$ \mathcal{H}^s_\delta(A_n) \le 2^{1+\sum_{j<k}p_j+\frac{p_k}{2}}2^{n+3}(2^{-\sum_{j \le k}p_j})^s \le 2^{n+4+\sum_{j<k}p_j-(s-\frac{1}{2})p_k}.$$
  Since $\sum_{j<k}p_j=o(p_k)$, $\mathcal{H}^s(A_n)=0$. So $\mathrm{dim_H}(A_n) \le \frac{1}{2}$. \end{proof}

\begin{remark}
   Note that the analogue of the above holds in the $p$-adics as well. Namely, there is an $F_\sigma$ subgroup $A\subseteq (\mathbb{Z}_p, +)$ and $x\in \mathbb{Z}_p$ such that $\mu(A)=0$ and $A+xA=\mathbb{Z}_p$, where $\mu$ denotes the Haar measure on $\mathbb{Z}_p$. Moreover, one can also achieve that the Hausdorff dimension is $1/2$. The proof follows from the same strategy as the previous proof.  We include the details for the sake of completeness.
\end{remark}
\begin{proof}
We only include the proof for the existence of such $A$, the calculation of the Hausdorff dimension of $A$ is left to the reader as it follows from the same argument as in $\mathbb{R}$.

Note that every element $a \in \mathbb Z_p$ has a unique expansion
\[
a=\sum_{n=0}^\infty a_n p^n,
\qquad a_n\in\{0,\dots,p-1\}.
\]
    Define
\[
Q_n=\{m\in\mathbb N:p^{n-1}\le m<p^n\}.
\]
Then
\[
|Q_n|=(p-1)p^{n-1}.
\]

Partition $\mathbb N$ into two sets $P_1,P_2$ so that for every
$n\ge1$, the first half of the indices in $Q_n$ belong to $P_1$
and the remaining indices belong to $P_2$.
Formally,
\[
P_1\cap Q_n
=
\left\{
p^{n-1},
\dots,
p^{n-1}+\Bigl\lfloor\frac{|Q_n|}{2}\Bigr\rfloor-1
\right\},
\]
and
\[
P_2=\mathbb N\setminus P_1.
\]

Define
\[
A_0=
\left\{
\sum_{n=0}^\infty a_n p^n:
\forall n\in P_2\ (a_n=0)
\right\}.
\]

Thus elements of $A_0$ have arbitrary digits on $P_1$ and vanish on
$P_2$.

For each $m\ge1$, define
\[
A_m=
\underbrace{A_0\pm A_0\pm\cdots\pm A_0}_{m}.
\]

Let
\[
A=\bigcup_{m\ge1}A_m.
\]

Since $A_0$ is compact, every $A_m$ is compact, hence $A$ is an
$F_\sigma$ subgroup of $(\mathbb Z_p,+)$.

Fix $m\ge1$.
Carrying in $p$-adic addition propagate only finitely far, which is comparable to $m$.
Hence there exists $N=N(m)$ such that for every sufficiently large
$k$ and every
\[
a=\sum a_n p^n\in A_m,
\]
either
\[
\forall j\in I_k\ (a_j=0)
\]
or
\[
\forall j\in I_k\ (a_j=p-1),
\]
where $I_k$ is a terminal segment of $Q_k$ of length comparable to
$|Q_k|/2$.

Therefore $A_m$ can be covered by at most
\[
2p^{|Q_1|+\cdots+|Q_{k-1}|+|Q_k|/2+O(1)}
\]
balls of radius
\[
p^{-\max Q_k}.
\]

Since a ball of radius $p^{-N}$ has Haar measure $p^{-N}$, it follows
that
\[
\mu(A_m)\le Cp^{-c|Q_k|}
\to0.
\]

Hence every $A_m$ is Haar null, and therefore $A$ is Haar null.

Now define
\[
x=\sum_{k=1}^\infty p^{p^k}.
\]

Thus the $p^k$-th digit of $x$ equals $1$, and all other digits vanish.

Let
\[
y=\sum_{n=0}^\infty y_n p^n\in\mathbb Z_p.
\]

We recursively construct
\[
b=\sum b_n p^n\in A_0
\]
such that
\[
bx
\]
agrees with $y$ on all coordinates in $P_2$.

Multiplication by $p^{p^k}$ shifts digits exactly $p^k$ places:
\[
p^{p^k}\sum b_n p^n
=
\sum b_n p^{n+p^k}.
\]

Because the supports are widely separated, digits coming from later
blocks cannot influence the current block except through bounded carry
effects. Hence, recursively on $k$, one may choose the free digits
\[
\{b_n:n\in P_1\cap Q_k\}
\]
so that
\[
(bx)_n=y_n
\qquad
(n\in P_2\cap Q_k).
\]

Define
\[
c=y-bx.
\]

Since $bx$ already agrees with $y$ on $P_2$, the element $c$ has
vanishing digits on $P_2$, hence
\[
c\in A_0.
\]

Therefore
\[
y=c+bx\in A+xA.
\]

Since $y\in\mathbb Z_p$ was arbitrary,
\[
A+xA=\mathbb Z_p.
\]
\end{proof}
In the proof of Theorem \ref{2}, both $A$ and $x$ depend on the intervals $Q_n$. What is the relation between $A$ and elements of $X_A$ in general?

\begin{proposition}
    If  $A$ is an analytic subgroup of $(\mathbb{R},+)$ and $x \in X_A$, then there are $c, d \in A$ such that $x=\frac{c}{d}$. In particular, $cA+dA=\mathbb{R}$.
\end{proposition}

This result is an immediate corollary of the following Theorem of Le Gac.

\begin{fact}[Barth\'elemy Le Gac \cite{MR687640}]
If $G$ and $H$ are analytic subgroups of $(\mathbb{R},+)$ such that $G+H=\mathbb{R}$ and $G \cap H =\{0\}$, then either $G=\mathbb{R}$ or $G=\{0\}$.
\end{fact}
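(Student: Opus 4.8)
The plan is to reformulate the hypotheses as a \emph{direct sum} decomposition and to exploit the projection it yields. Since $G + H = \mathbb{R}$ and $G \cap H = \{0\}$, every real $r$ has a unique representation $r = g + h$ with $g \in G$ and $h \in H$; I would let $\pi \colon \mathbb{R} \to \mathbb{R}$ be the projection $\pi(r) = g$ onto $G$ along $H$. A routine check, using that $G$ and $H$ are subgroups and that the decomposition is unique, shows that $\pi$ is additive, that $\operatorname{im}(\pi) = G$ and $\ker(\pi) = H$, and that $\pi \circ \pi = \pi$. Thus the entire problem is transferred to the single homomorphism $\pi$: it suffices to show that $\pi$ is Lebesgue measurable, for then the rigidity of additive functions forces the conclusion.

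The key step is the measurability of $\pi$, and this is exactly where analyticity enters. I would argue that the graph
\[ \Gamma = \{(r,g) \in \mathbb{R}^2 : g = \pi(r)\} = \{(r,g) : g \in G \text{ and } r - g \in H\} \]
is analytic: the maps $(r,g) \mapsto g$ and $(r,g) \mapsto r - g$ are continuous, so $\{(r,g): g \in G\}$ and $\{(r,g) : r - g \in H\}$ are preimages of analytic sets and hence analytic, and $\Gamma$ is their intersection. Given any open interval $(a,b)$, the set $\pi^{-1}((a,b))$ is the projection to the first coordinate of the analytic set $\Gamma \cap (\mathbb{R} \times (a,b))$, hence analytic and therefore Lebesgue measurable by Fact~\ref{Jech}. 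As this holds for every open interval, $\pi$ is measurable.

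Finally I would invoke the classical fact that every Lebesgue measurable additive function $f \colon \mathbb{R} \to \mathbb{R}$ (that is, a measurable solution of Cauchy's equation $f(x+y)=f(x)+f(y)$) is linear, so that $\pi(x) = cx$ for some constant $c \in \mathbb{R}$. Then $G = \operatorname{im}(\pi) = c\,\mathbb{R}$, which equals $\{0\}$ when $c = 0$ and equals $\mathbb{R}$ when $c \neq 0$; consistently, idempotency of $\pi$ forces $c^2 = c$, i.e.\ $c \in \{0,1\}$. This yields $G = \{0\}$ or $G = \mathbb{R}$, as required. I expect the main obstacle to be precisely the measurability step: one must resist trying to show that $G$ or $H$ individually has positive measure (both may well be null, so Steinhaus applied to them separately gives nothing), and instead package the direct-sum structure into the homomorphism $\pi$, whose \emph{graph} is analytic even though the subgroups themselves contribute only through continuous preimages.
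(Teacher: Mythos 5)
The paper states this result only as a cited Fact (attributed to Le Gac) and gives no proof of its own, so there is nothing internal to compare against; judged on its own terms, your argument is correct and is essentially the standard proof of results of this type. The three pillars all hold up: the direct-sum hypotheses do give a well-defined additive idempotent projection $\pi$ with image $G$ and kernel $H$; the graph $\{(r,g): g\in G,\ r-g\in H\}$ is analytic (preimages of analytic sets under continuous maps are analytic, and analytic sets are closed under finite intersections), so $\pi^{-1}((a,b))$ is a projection of an analytic set, hence analytic and Lebesgue measurable by Fact~\ref{Jech}; and the Fr\'echet--Sierpi\'nski theorem that a Lebesgue measurable additive function is linear then forces $\pi(x)=cx$ with $c^2=c$, giving $G=\{0\}$ or $G=\mathbb{R}$. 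Your closing remark is also the right diagnosis: the point is to get measurability of the \emph{homomorphism} $\pi$ rather than to try to extract positive measure from $G$ or $H$ individually.
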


Recall that for each $\alpha \in [0,1]$, there is a Borel subgroup $A_\alpha$ of $(\mathbb{R},+)$ with $\mathrm{dim_H}(A)=\alpha$ \cite{1}. For $\alpha >\frac{1}{2}$, $X_{A_\alpha}$ is co-null by Proposition \ref{36}. The following theorem claims that for such groups $A_\alpha$, there is an $F_\sigma$ subgroup $B\subseteq A_\alpha$ such that $X_B$ is co-null. It also reduces the problem of constructing an $F_{\sigma}$ proper subgroup $B$ of the reals with $X_B \neq \emptyset$ to the problem of constructing an analytic proper subgroup $A$ of the reals with $X_A \neq \emptyset$.

\begin{theorem}\label{fsigma}
 Suppose that $A$ is an analytic subgroup of $(\mathbb{R},+)$.

(i) If $x \in X_A$, then there is an $F_{\sigma}$ subgroup $B\subseteq A$ such that $x \in X_B$. 

(ii) If $X_A$ is co-null, then there is an $F_{\sigma}$  subgroup $B\subseteq A$ such that $X_B$ is co-null.
\end{theorem}

By Proposition \ref{5}(vi), $X_A=\bigcup_{q\in \mathbb{Q}}q+X_A$. Thus if $X_A$ has positive measure, it is automatically co-null. We need the following uniformization theorem in the proof.
\begin{fact}[Jankov, von Neumann, see {\cite[18.1]{00722611}}]\label{lemma: uniformizing} Let $X$, $Y$ be standard Borel spaces and $P \subseteq X \times Y$ is analytic. Then there is a $\Sigma$-measurable function $f:X \to Y$ such that $(x,f(x)) \in P$ for all $x \in proj_X(P)$ (the projection from $P$ to $X$), where $\Sigma$ is the $\sigma$-algebra generated by the analytic sets.
\end{fact}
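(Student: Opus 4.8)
The plan is to prove this as the classical Jankov--von Neumann selection theorem, reducing the general case to the uniformization of a closed set in $X \times \mathcal{N}$, where $\mathcal{N} = \mathbb{N}^{\mathbb{N}}$ is Baire space, by choosing the lexicographically least branch of each nonempty section. First I would reduce to the case $Y = \mathcal{N}$. Equip $Y$ with a Polish topology generating its Borel structure; if $Y$ or $\mathrm{proj}_X(P)$ is empty there is nothing to prove, so assume $Y \neq \emptyset$ and fix a continuous surjection $g : \mathcal{N} \to Y$. Then $P' = \{(x,z) \in X \times \mathcal{N} : (x, g(z)) \in P\}$ is analytic, being the preimage of the analytic set $P$ under the continuous map $\mathrm{id} \times g$, and $\mathrm{proj}_X(P') = \mathrm{proj}_X(P)$ since $g$ is onto. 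A $\Sigma$-measurable uniformizer $h : X \to \mathcal{N}$ of $P'$ then yields $f = g \circ h$, which uniformizes $P$ and remains $\Sigma$-measurable because $g$ is continuous, so $f^{-1}(U) = h^{-1}(g^{-1}(U)) \in \Sigma$ for open $U$.

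Next I would absorb the existential witness so as to pass from an analytic target to a closed one. Since $P' \subseteq X \times \mathcal{N}$ is analytic, write $P' = \{(x,y) : \exists w\,(x,y,w) \in C\}$ for a closed $C \subseteq X \times \mathcal{N} \times \mathcal{N}$. Using a homeomorphism $\langle\cdot,\cdot\rangle : \mathcal{N} \times \mathcal{N} \to \mathcal{N}$ with continuous inverse $u \mapsto (\pi_1(u), \pi_2(u))$, set $\tilde C = \{(x, \langle y, w\rangle) : (x,y,w) \in C\}$, a closed subset of $X \times \mathcal{N}$ satisfying $\mathrm{proj}_X(\tilde C) = \mathrm{proj}_X(P')$. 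If $u : X \to \mathcal{N}$ is a $\Sigma$-measurable selection for $\tilde C$, then $h = \pi_1 \circ u$ uniformizes $P'$ and stays $\Sigma$-measurable. Thus it suffices to uniformize the closed set $\tilde C$.

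Now for the core construction: for a closed $D \subseteq X \times \mathcal{N}$ and $x \in \mathrm{proj}_X(D)$, the section $D_x$ is a nonempty closed subset of $\mathcal{N}$, and I would define $f(x)$ to be its lexicographically least element, fixing some constant value for $x \notin \mathrm{proj}_X(D)$. This least element exists: choosing at stage $k$ the least value $n_k$ of the $k$-th coordinate consistent with extendibility inside $D_x$ produces a point that lies in $D_x$ by closedness. For measurability I would verify $f^{-1}(N_s) \in \Sigma$ for every basic clopen set $N_s$ of extensions of a finite sequence $s$. The event ``the $k$-th coordinate of the leftmost branch equals $m$'' is, conditioned on the previously fixed coordinates, the difference of two sets of the form $\{x : \exists y \in D_x \text{ with a prescribed initial segment}\}$, each of which is the $X$-projection of a closed, hence analytic, set and therefore analytic. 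Since $\Sigma$ is closed under complements and countable unions, each such condition lies in $\Sigma$, and $f^{-1}(N_s)$ is a finite Boolean combination of them, hence in $\Sigma$; this gives $\Sigma$-measurability of $f$.

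The main obstacle will be the measurability bookkeeping of the last step: one must express ``$f(x)$ begins with $s$'' purely through projections of closed sets and Boolean operations, invoking only that projections of analytic sets are analytic and that $\Sigma$ is a $\sigma$-algebra containing the analytic sets. The existence of the lexicographically least branch and the two reductions are routine once this computation is organized correctly.
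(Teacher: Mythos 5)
The paper does not prove this statement; it is quoted as a known result with a citation to Kechris (Theorem 18.1), so there is no internal proof to compare against. Your argument is correct and is essentially the standard proof from that source: reduce to $Y=\mathcal{N}$ via a continuous surjection, absorb the analytic witness to pass to a closed subset of $X\times\mathcal{N}$, select the leftmost branch of each section, and observe that the sets $\{x:\exists y\in D_x,\ y\succ s\}$ are projections of closed sets, hence analytic, making the preimages of basic clopen sets finite Boolean combinations of analytic sets and therefore members of $\Sigma$.
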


In particular, if $X$ and $Y$ are Borel subsets of Euclidean spaces, then the function $f$ is Lebesgue measurable by Fact~\ref{Jech}.

\begin{proof}[Proof of Theorem \ref{fsigma}.]
For (i), suppose that $A$ is an analytic subgroup of $(\mathbb{R},+)$ and $x \in X_A$. Then the graph of the function $\varphi:A\times A \to \mathbb{R}$ so that $(a,b)\mapsto a+xb$ is an analytic subset of $\mathbb{R}^3$. By Fact \ref{lemma: uniformizing}, there is a Lebesgue measurable function $f:\mathbb{R} \to A\times A$ such that for any $z \in \mathbb{R}$, $\varphi(f(z))=z$. By the Lusin Theorem, there is a compact set $P\subseteq \mathbb{R}$ with positive measure and a continuous function $\psi: P\to A\times A$ so that for any $z\in P$, $\varphi(\psi(z))=z$. Let 
$$C_0=\{a \in \mathbb{R}:\exists z \in P \exists b \in \mathbb{R}(\psi(z)=(a,b))\},$$
and 
$$C_1=\{b \in \mathbb{R}:\exists z \in P \exists a \in \mathbb{R}(\psi(z)=(a,b))\}.$$
Both $C_0$ and $C_1$ are compact and so is $C_0\cup C_1$. Let $B$ be the group generated by $C_0\cup C_1 $.  Then $B$ is $F_{\sigma}$ and $B \subseteq A$. Moreover, the image $\varphi(B \times B)\supseteq P$ has positive measure. Since $\varphi(B \times B)$ is a subgroup of $\mathbb{R}$, by Corollary \ref{steinhaus}, $\varphi(B \times B)=\mathbb{R}$. Hence $x \in X_B$.

For (ii), suppose that $A$ is an analytic subgroup of $(\mathbb{R},+)$ and $X_A$ is co-null. By the regularity of Lebesgue measure, there is a co-null $F_{\sigma}$ subset $F \subseteq X_A$. It suffices to
show that there is an $F_\sigma$ subgroup $B \subseteq A$ such that for almost all $x \in
F$, we have $x\in X_B$. 

The graph of the function
\[ \varphi : A \times A \times F \rightarrow \mathbb{R} \times F, (a, b, x)
   \mapsto (a + x b, x) \]
is an analytic subset of $\mathbb{R}^5$. By Fact \ref{lemma: uniformizing}, there is a Lebesgue
measurable function $$f : \mathbb{R} \times F \rightarrow A \times A \times F$$
such that for any $z \in \mathbb{R} \times F$, $\varphi (f (z)) = z$. By
the Lusin Theorem, for all $n$, there is a compact subset $K_n$ of $\mathbb{R}
\times F$ such that $\psi_n := f \upharpoonright K_n$ is continuous and
$$\mu ((\mathbb{R} \times F) \cap B_n (0) \backslash K_n) < \frac{1}{n},$$
where $\mu$ is the Lebesgue measure on $\mathbb{R}^2$ and $B_n (0) \subseteq
\mathbb{R}^2$ is the closed ball with radius $n$ and center $0$. Let 
$$C_0=\{a \in \mathbb{R}:\exists n\exists z \in K_n \exists b,x \in \mathbb{R}(\psi_n(z)=(a,b,x))\},$$
and 
$$C_1=\{b \in \mathbb{R}:\exists n\exists z \in K_n \exists a,x \in \mathbb{R}(\psi(z)=(a,b,x))\}.$$
Both $C_0$ and $C_1$ are $F_\sigma$ and so is $C_0\cup C_1$. Let $B$ be the additive group generated by $C_0\cup C_1 $.  Then $B$ is an $F_{\sigma}$ set and $B \subseteq A$. Moreover, the projection of the image $\varphi(B \times B \times \{x\})$ to the first coordinate has positive measure for almost all $x \in F$ by the Fubini theorem since $(\mathbb{R}\times F) \backslash \cup_n K_n$ is null. So $X_B$ is conull.
\end{proof}

\section{Constructions assuming CH}\label{s4}

The main goal of this section is to prove Theorem \ref{main}. It requires notion from recursion theory and algorithmic dimension. A standard reference is \cite{MR2732288}. We consider the
elements of the Cantor space $2^{\omega}$ as reals. We denote the set of binary strings of finite
length by $2^{< \omega}$. Given $\sigma, \tau \in 2^{< \omega}$, we write
$\sigma \prec \tau$ if $\sigma$ is a proper initial segment of $\tau$. The
same notation is applied when $\tau$ is replaced by a real $x \in 2^{\omega}$.
We write $\sigma \tau$ to denote the string obtained by concatenating $\sigma$
and $\tau$. The Cantor space is equipped with a topology generated by the
basic clopen sets $I_{\sigma} = \{ \sigma \alpha : \alpha \in 2^{\omega} \}$
for $\sigma \in 2^{< \omega}$. It is also a measure space: the Lebesgue measure $\mu  (I_{\sigma}) = 2^{- |
\sigma |}$, where $| \sigma |$ is the length of the string $\sigma$. For a co-infinite set $z \subseteq \omega$, define 
$$F(z)=\sum_{i \in z}  2^{-i-1}
\in [0,1) \subseteq \mathbb{R}.$$
$F$ is an ``isometry'' between the co-null subset of $2^\omega$ consisting of the co-infinite sets and the interval $[0,1)$. Note that under $F$, the measure $\mu$ on $2^\omega$ turns into the Lebesgue measure on $\mathbb{R}$. Similarly the Hausdorff dimension is also preserved between $2^\omega$ and $\mathbb{R}$. For a rigorous proof of this fact, see \cite[Section 4]{marks2024}. There the authors proved that $F$ preserves the property of having positive Hausdorff measure. Since the set of all the co-finite elements in $2^\omega$ is countable, it is $\mu$-null. Hence we assume that every real $x \in 2^\omega$ we deal with is co-infinite and consider the arithmetic operations on $2^{\omega}$ to be the same as arithmetic operations on $\mathbb{R} / \mathbb{Z}$. Given reals $x$, $y$. We write $x \leq_T y$ if $x$ is Turing reducible to $y$. Given a real $y$, if $W \subseteq 2^{< \omega}$ is r.e. (recursively enumerable) in
$y$, then the set $U \subseteq 2^{\omega}$ of reals with an initial segment in
$W$ is called a $\Sigma^0_1 (y)$ \textit{set}. Given $\sigma \in 2^{< \omega}$ and $x \in 2^{\omega}$, let $K (\sigma)$ be the prefix-free Kolmogorov complexity of $\sigma$ and $K^x (\sigma)$ be the prefix-free Kolmogorov complexity of $\sigma$ relativized to $x$ (see {\cite{MR2732288}}). Given reals $x, y \in 2^{\omega}$, define the real $x \oplus y$ such that for
each $n$, $x \oplus y  (2 n) = x  (n)$ and $x \oplus y  (2 n + 1) = y  (n)$. Note that operation $\oplus$ is not associative. However, it is invariant under Turing degree. For example, we have $(x_1 \oplus x_2)\oplus x_3 \equiv_T x_1 \oplus (x_2\oplus x_3)$. Then we define by recursion that $$x_1\oplus\cdots \oplus x_n=x_1\oplus (x_2\oplus x_3\oplus \cdots \oplus x_n).$$ 

\begin{definition}
  (i) A set $S \subseteq 2^{< \omega}$ is \emph{dense} if for every $\sigma \in 2^{<
  \omega}$, there is a string $\tau \in S$ such that $\tau \succ \sigma$.
  
  (ii) Given reals $x$ and $y$. We say that $x$ is \emph{$y$-generic}\footnote{Our definition of $y$-generic reals should be
named as the \textit{weakly 1-$y$-generic reals} according to the tradition. Since there is no need for the more general concept of
$n$-genericity in this paper, we state the definition as above for
convenience. } if for every
  $\Sigma^0_1 (y)$ dense set $S \subseteq 2^{< \omega}$, there is a string
  $\sigma \prec x$ such that $\sigma \in S$. We say $x$ is \emph{generic} if
  $x$ is $x_0$-generic where $x_0(n)=0$ for all $n \in \omega$. In particular, if $x$ is $y$-generic for some $y$, then $x$ is generic.
\end{definition}

\begin{fact}[H{\"o}lzl et al. {\cite{MR4085071}}]
  \label{l9}There is a constant $c$ such that for every generic real $x$ and
  $i \in \{0, 1\}$, there are infinitely many $n$ such that
  \[  \forall m \in \left[ n, 2^{2^{2^n}} \right] (K (G
     \upharpoonright m) \leqslant K (m) + c \wedge G (m) = i) . \]
\end{fact}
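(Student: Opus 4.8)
The plan is to argue by forcing with the weakly $1$-generic notion recalled above: to force a property to hold for every generic real it suffices to realize it by meeting a \emph{computable} dense subset of $2^{<\omega}$, and to secure the property \emph{infinitely often} it suffices to meet one such dense set $D_{t}$ for every threshold $t$. Accordingly, for each $i\in\{0,1\}$ and each $t\in\omega$ I would build a computable dense set $D_{t,i}\subseteq 2^{<\omega}$ whose every element is a \emph{certificate} $w$ that pins down a window $[n,2^{2^{2^n}}]$ with $n\ge t$ on which $w$ (hence any $G\succ w$) is constantly equal to $i$ and on which the complexity bound will be automatic. Since a generic real meets each $D_{t,i}$, it acquires such windows with $n\to\infty$, giving the required infinitely many $n$; the single constant $c$ will come from one fixed decoding machine together with one extra bit for $i$.

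The combinatorial core is the design of the certificates. I would fix in advance a computable, tower-separated sequence of window-starts $n_0<n_1<\cdots$ with $n_{k+1}>2^{2^{2^{n_k}}}$, arranged so that each $n_k$ \emph{self-delimitingly encodes} a finite string $\sigma_k$, and so that every string occurs as some $\sigma_k$ with $n_k$ arbitrarily large. The certificate attached to $n_k$ is
\[
 w_k \;=\; \sigma_k\, 0^{\,n_k-|\sigma_k|}\, i^{\,(2^{2^{2^{n_k}}}-n_k+1)},
\]
so that $w_k\succ\sigma_k$ and $w_k(m)=i$ for all $m\in[n_k,2^{2^{2^{n_k}}}]$. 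Set $D_{t,i}=\{w_k: n_k\ge t\}$; this is computable, and it is dense, since given any $\rho$ one chooses $k$ with $\sigma_k=\rho$ and $n_k\ge t$, whence $w_k\succ\rho$. By genericity, for every $t$ there is $k$ with $n_k\ge t$ and $w_k\prec G$, so $G$ is constantly $i$ on $[n_k,2^{2^{2^{n_k}}}]$, and these windows recur with $n_k\to\infty$.

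The hard part will be the uniform bound $K(G\uh m)\le K(m)+c$ on each window, where a genuine tension appears: to make the frozen prefix $G\uh n_k=\sigma_k 0^{\,n_k-|\sigma_k|}$ cheap — indeed computable from $n_k$ — the start $n_k$ must itself be \emph{complex}, carrying the arbitrary information $\sigma_k$; yet to avoid paying $K(n_k)$ afresh for each $m$ in the window one must recover $n_k$ from $m$ alone at cost $O(1)$. These demands are reconciled exactly by the tower separation: for $m\in[n_k,2^{2^{2^{n_k}}}]$ the index $n_k$ is \emph{the unique} valid window-start $\le m$ whose window still contains $m$ (any earlier start has $2^{2^{2^{n_j}}}<n_{k}\le m$, any later start has $n_j>2^{2^{2^{n_k}}}\ge m$). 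Hence a single fixed machine can, on input a shortest program for $m$, compute $n_k$ by searching for the largest valid start $\le m$ and checking that its window covers $m$, decode $\sigma_k$ from $n_k$, and print $G\uh m=\sigma_k 0^{\,n_k-|\sigma_k|} i^{\,m-n_k}$. This machine has fixed size, so $K(G\uh m)\le K(m)+c$ with $c$ independent of $G$, $k$, and $m$; taking the larger of the two constants for $i=0,1$ finishes. I expect this recovery step — engineering the encoding so that the enormous window determines its own starting point, which is precisely what forces the triply exponential length — to be the main obstacle, with the rest being routine genericity bookkeeping and prefix-free coding estimates.
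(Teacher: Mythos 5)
The paper offers no proof of this statement---it is quoted as a black box from H\"olzl, Merkle, Miller, Stephan and Yu \cite{MR4085071}---so there is nothing internal to compare against; judged on its own, your argument is correct and is essentially the standard proof of this lemma. You identify the one genuinely non-routine point accurately: a uniform constant $c$ is unattainable if one pays $K(\sigma)$ for the frozen prefix, so the prefix must be coded into the window start $n$ itself, and the tower separation of consecutive starts is exactly what lets a single fixed machine recover $n$ (hence $G\uh n$, hence $G\uh m$) from $m$ alone at cost $O(1)$; the remaining points you defer---computability and density of the certificate sets $D_{t,i}$, meeting them for every threshold $t$ to get infinitely many windows, and handling $i\in\{0,1\}$ by two machines or one extra bit---are indeed routine.
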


\begin{fact}[The point-to-set principle, Lutz and
Lutz {\cite{MR3811993}}]\label{pts}
  For every set $E \subseteq \mathbb{R}$,
  \[ \mathrm{dim_H} (E) = \min_{A \subseteq \omega} \sup_{x \in E} \liminf_{n
     \rightarrow \infty} \frac{K^A (x \upharpoonright n)}{n} . \]
\end{fact}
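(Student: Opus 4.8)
The plan is to prove the two inequalities separately between $\mathrm{dim_H}(E)$ and the quantity $D(E):=\inf_{A\subseteq\omega}\sup_{x\in E}\dim^A(x)$, where I abbreviate the \emph{algorithmic dimension of $x$ relative to $A$} by $\dim^A(x)=\liminf_{n\to\infty}K^A(x\upharpoonright n)/n$. Throughout I work in Cantor space $2^\omega$, where the ultrametric makes covers by basic clopen cylinders $I_\sigma$ canonical and where $\mathcal H^s$ may be computed using such covers; I then transport the conclusion to $\mathbb{R}$ through the isometry $F$, which is harmless since $F$ preserves Hausdorff dimension.

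For the inequality $\mathrm{dim_H}(E)\le D(E)$, I would fix an arbitrary oracle $A$, set $s=\sup_{x\in E}\dim^A(x)$, and show $\mathrm{dim_H}(E)\le s$. Given $t>s$, every $x\in E$ satisfies $K^A(x\upharpoonright n)<tn$ for infinitely many $n$, so the sets $S_n=\{\sigma\in 2^n:K^A(\sigma)<tn\}$ capture infinitely many initial segments of each $x$. The prefix-free Kraft inequality $\sum_\sigma 2^{-K^A(\sigma)}\le 1$ forces $|S_n|<2^{tn}$, and since each $x\in E$ has arbitrarily long initial segments in $\bigcup_n S_n$, the family $\{I_\sigma:\sigma\in S_n,\ n\ge N\}$ covers $E$ for every $N$. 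For any $t'>t$ one then estimates $\sum_{n\ge N}|S_n|\,2^{-t'n}<\sum_{n\ge N}2^{-(t'-t)n}\to 0$, whence $\mathcal H^{t'}(E)=0$ and $\mathrm{dim_H}(E)\le t'$. Letting $t'\downarrow t\downarrow s$ gives $\mathrm{dim_H}(E)\le s$, and taking the infimum over $A$ yields $\mathrm{dim_H}(E)\le D(E)$. This direction is an elementary covering estimate and needs no oracle construction.

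The reverse inequality $D(E)\le\mathrm{dim_H}(E)$ is the substantive one, and here I would build a single witnessing oracle. Fix a rational $s>\mathrm{dim_H}(E)$, so $\mathcal H^s(E)=0$, and for each $k$ choose a cover $\mathcal C_k$ of $E$ by cylinders with $\sum_{I_\sigma\in\mathcal C_k}2^{-s|\sigma|}<2^{-2k}$ (possible since $\mathcal H^s_\delta(E)\to 0$). Note that this summability bound already forces $|\sigma|>2k/s$ for every $I_\sigma\in\mathcal C_k$, since no single term can exceed the whole sum; this is exactly what will make the relevant code lengths positive and the witnessing lengths tend to infinity. Let the oracle $A$ encode the entire doubly-indexed family $\{(k,\sigma):\sigma\in\mathcal C_k\}$. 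Relative to $A$, the list of \emph{requests} $\bigl(\lceil s|\sigma|\rceil-k,\ \sigma\bigr)$ is computably enumerable, and its total Kraft weight is $\sum_k 2^{k}\sum_{I_\sigma\in\mathcal C_k}2^{-s|\sigma|}<\sum_{k\ge 1}2^{-k}=1$. The Kraft--Chaitin (machine existence) theorem relativized to $A$ then delivers $K^A(\sigma)\le\lceil s|\sigma|\rceil-k+O(1)$ for every $\sigma\in\mathcal C_k$. For any $x\in E$ and any $k$, membership of $x$ in some $I_{\sigma^{(k)}}\in\mathcal C_k$ produces an initial segment $\sigma^{(k)}\prec x$ with $|\sigma^{(k)}|>2k/s\to\infty$ and $K^A(x\upharpoonright|\sigma^{(k)}|)\le s|\sigma^{(k)}|+O(1)$, so $\dim^A(x)\le s$ uniformly over $E$. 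Hence $D(E)\le s$, and letting $s$ decrease to $\mathrm{dim_H}(E)$ through the rationals completes the proof.

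The main obstacle is the second direction, and within it the weight bookkeeping. The slack $2^{-2k}$ (rather than $2^{-k}$) in the chosen covers is precisely what makes the series $\sum_k 2^{k}\cdot 2^{-2k}$ converge after each request is shortened by $k$ bits, and, as noted, it simultaneously guarantees both that the code lengths $\lceil s|\sigma|\rceil-k$ are nonnegative and that the witnessing lengths $|\sigma^{(k)}|$ diverge, so that the $\liminf$ genuinely drops to $s$ rather than merely being controlled at finitely many scales. I would also confirm that taking $s$ rational makes $\lceil s|\sigma|\rceil$ computable, so the requests are truly $A$-enumerable; alternatively a description of $s$ can be folded into $A$. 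By contrast, the first inequality is routine and uses nothing beyond the Kraft inequality for prefix-free complexity.
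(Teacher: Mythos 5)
The paper does not prove this statement; it is quoted as a known Fact from Lutz and Lutz, so there is no internal proof to compare against. Your argument is a correct, self-contained proof and follows the standard route from the literature: the easy direction via the Kraft inequality and a covering estimate showing $\mathcal{H}^{t'}(E)=0$ for $t'>t>\sup_{x\in E}\dim^A(x)$, and the substantive direction by encoding near-optimal covers $\mathcal{C}_k$ with weight $2^{-2k}$ into an oracle and applying the relativized Kraft--Chaitin theorem, with the $2^{-2k}$ slack correctly absorbing the $k$-bit discount and forcing the witnessing lengths to diverge. The only points worth making explicit are the routine reduction of arbitrary covers to cylinder covers in $2^{\omega}$ and the transfer to $\mathbb{R}$ via $F$, both of which you flag adequately.
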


\begin{lemma}[Folklore]
  \label{10}\label{09}Let $G = \{ x \in 2^{\omega} : x \text{ } \mathrm{generic} \}$, then $\mathrm{dim_H} (G) = 0$.
\end{lemma}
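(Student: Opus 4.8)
The plan is to read off the result by combining the two facts just stated. By the point-to-set principle (Lutz--Lutz), for any fixed oracle $A \subseteq \omega$ we have
\[ \mathrm{dim_H}(G) \le \sup_{x \in G} \liminf_{n \to \infty} \frac{K^A(x \upharpoonright n)}{n}, \]
since $\mathrm{dim_H}(G)$ is the infimum over all $A$ of the right-hand side. Taking $A = \emptyset$, so that $K^A$ is simply the unrelativized $K$, it therefore suffices to prove that every generic real $x$ satisfies $\liminf_{n \to \infty} K(x \upharpoonright n)/n = 0$; as Hausdorff dimension is nonnegative, this forces $\mathrm{dim_H}(G) = 0$.

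First I would fix an arbitrary generic real $x$ and apply Fact~\ref{l9}: there is a constant $c$, independent of $x$, and infinitely many $n$ for which $K(x \upharpoonright m) \le K(m) + c$ holds for every $m$ in the interval $[n, 2^{2^{2^n}}]$. For each such $n$ I choose the right endpoint $m_n = 2^{2^{2^n}}$, which lies in the admissible range, giving $K(x \upharpoonright m_n) \le K(m_n) + c$. The crucial observation is that although $m_n$ is astronomically large, it is cheap to describe: it is produced from $n$ by a single fixed program, so $K(m_n) \le K(n) + O(1) = O(\log n)$. Consequently $K(x \upharpoonright m_n) \le O(\log n) + c$, whereas the length $m_n = 2^{2^{2^n}}$ grows doubly exponentially in $2^n$, and hence
\[ \frac{K(x \upharpoonright m_n)}{m_n} \le \frac{O(\log n) + c}{2^{2^{2^n}}} \to 0 \quad (n \to \infty). \]
Since there are infinitely many such $n$, there are infinitely many lengths $m_n$ along which this ratio tends to $0$, so $\liminf_{m \to \infty} K(x \upharpoonright m)/m = 0$ for every generic $x$, as required.

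Putting these together yields $\sup_{x \in G} \liminf_n K(x \upharpoonright n)/n = 0$, and the point-to-set principle then gives $\mathrm{dim_H}(G) \le 0$, hence $\mathrm{dim_H}(G) = 0$. I do not expect a serious obstacle here, as the two cited facts do essentially all the work; the only points needing care are the elementary estimate $K(m_n) = O(\log n)$ (which is exactly what lets the enormous doubly-exponential gap in Fact~\ref{l9} dominate the numerator) and the bookkeeping in applying the point-to-set principle with the trivial oracle $A = \emptyset$ so that $K^A$ coincides with $K$. The genuinely \emph{hard} content is external to this lemma and lives in Fact~\ref{l9}; what remains is to assemble it correctly.
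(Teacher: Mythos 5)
Your proof is correct and follows essentially the same route as the paper: both reduce to showing $\liminf_n K(x\upharpoonright n)/n=0$ for every generic $x$ via the point-to-set principle, and both get this from Fact~\ref{l9} together with the standard logarithmic bound on the complexity of an integer. The only cosmetic difference is that you bound $K(m_n)$ for the computable endpoint $m_n=2^{2^{2^n}}$ via $K(m_n)\le K(n)+O(1)$, whereas the paper directly invokes $K(m)\le \log m+2\log\log m+O(1)$ for any $m$ in the interval; both are instances of the same elementary estimate.
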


\begin{proof}
  Note that $K (n) \leqslant \log n + 2 \log \log n + O (1)$ for every $n \in
  \omega$ (see {\cite{MR2732288}}). Then by Fact \ref{l9}, we have
  $ \liminf_{n \rightarrow \infty} \frac{K (x \upharpoonright n)}{n} = 0 $
  for every generic real $x$. Hence by Fact \ref{pts},
  $\mathrm{dim_H} (G) = 0$.
\end{proof}

The following lemma reflects the intuition that genericity is preserved under
arithmetic operations. Recall that we consider the arithmetic operations on $2^{\omega}$ to be the same as arithmetic operations on $\mathbb{R} / \mathbb{Z}$. 

\begin{lemma}
  \label{8}Given $x, a, b, g \in 2^{\omega} \backslash \{0\}$, if $g$ is $a \oplus b \oplus x$-generic, then $g + b$, $a \cdot g$, $g^{n} (n \in \mathbb{Z}\backslash \{0\})$ and $a
  \cdot g + b$ are $a \oplus b \oplus x$-generic.
\end{lemma}

\begin{proof}
We proof that $g+b$ is $a \oplus b \oplus x$-generic first.
  Fix a $\Sigma_1^0
  (a \oplus b \oplus x)$ dense set $S$ and $\{S_s\}_{s \in \omega}$ an $a \oplus b \oplus x$-recursive enumeration of $S$ ($S_n \subseteq S_{n+1}$ for each $n$ and there is exactly one element in $S_{n+1} \backslash S_{n}$). We inductively define $S {}'_s$ at stage $s+1$ as follows: 
  
  suppose $\sigma \in S_{s+1} \backslash S_{s}$. Since there are infinitely many 0s in the sequence $b$, there is $\tau$ such that $\forall y \succ \tau (y + b \succ \sigma)$. Choose $\tau$ of the least length and enumerate it into $S{}'$.

  It is clear that $S{}'$ is $\Sigma_1^0
  (a \oplus b \oplus x)$. Fix $\gamma \in 2^{<\omega}$. Choose $\gamma_1$ such that $\{y+b:y \succ \gamma \} \supseteq I_{\gamma_1}$. Since $S$ is dense, there is $\rho \in S$ such that $\rho \succ \gamma_1$. By the definition of $S'$, there is $\tau \in S'$ such that $\forall y \succ \tau (y+b \succ \rho)$. Then $\tau \succeq \gamma$. Hence $S'$ is dense. Since $g$ is $a \oplus b \oplus x$-generic, there is $\tau \prec g$ such that $\tau \in S{}'$. By the definition of $S{}'$, there is $\sigma \in S$ such that $\sigma \prec g+b$. So $g+b$ is $a \oplus b \oplus x$-generic.

  Similarly one can proof that $a \cdot g$, $g^{-1}$ and $g^{2}$ are $a \oplus b \oplus x$-generic, which implies $g^{n}$ and $a
  \cdot g + b$ also are.
\end{proof}

\begin{proof}[Proof of Theorem \ref{main}]
  Fix $\{ (x_{\alpha}, y_{\alpha})\}_{\alpha < \aleph_1}$ an enumeration of
  $ (\mathrm{dom}(F)\backslash F^{-1}(\mathbb{Q})) \times \mathrm{dom}(F)$ ($F$ is defined at the beginning of this section). We construct a
  sequence of pairs of reals $\{ (g_{\alpha}, h_{\alpha})\}_{\alpha <
  \aleph_1}$ by induction on $\alpha < \aleph_1$.
  
  \bigskip
  \textbf{Stage $\alpha$.} Define 
  \[ G_{\alpha}=\{ x_{\beta} \oplus y_{\beta} \oplus g_{\beta} \oplus h_{\beta} \oplus
     x_{\alpha} \oplus y_{\alpha} : \beta < \alpha \} . \]
 Let 
  \[I_{\alpha}=\{x:(\exists n)(\exists a_0,...,a_n \in G_{\alpha})[x \leq_T a_0 \oplus a_1 \oplus ... \oplus a_n]\},\] 
 the Turing ideal generated by $G_{\alpha}$. Since $I_{\alpha}$ is countable, there is a real $g_{\alpha}$ that is
  $z$-generic for all $z \in I_{\alpha}$. Let $h_{\alpha} = \frac{y_{\alpha}
  - g_{\alpha}}{x_{\alpha}}$.
  
  \bigskip
  Let $A_0$ be the group generated by $\{g_{\alpha} : \alpha <
  \aleph_1 \} \cup \{h_{\alpha} : \alpha < \aleph_1 \}$. We claim that $A_0$ contains only generic reals. For any $g \in A_0$, there are finite sequences $\{g_{\alpha_i}, h_{\alpha_i},
  s_i, t_i \}_{0 \leqslant i \leqslant n}$, where $s_i$, $t_i \in \mathbb{Z}$, and ordinals
  $\alpha_0 < \alpha_1 < \cdots < \alpha_n$ such that $s_n^2+t_n^2\neq 0$ and
  \[ g = \sum_{i = 0}^n  (s_i g_{\alpha_i} + t_i h_{\alpha_i}) . \]
  Define $c =  (\oplus_{0 \leqslant i \leqslant n - 1}  (g_{\alpha_i}
  \oplus h_{\alpha_i})) \oplus x_{\alpha_n} \oplus y_{\alpha_n}$. By
  the construction, $g_{\alpha_n}$ is $c$-generic. Also
  \[ s_n g_{\alpha_n} + t_n h_{\alpha_n} = t_n
     \frac{y_{\alpha_n}}{x_{\alpha_n}} + \left ( s_n - \frac{t_n}{x_{\alpha_n}}
     \right) g_{\alpha_n}, \]
  so
  \[ g = \left ( s_n - \frac{t_n}{x_{\alpha_n}} \right) g_{\alpha_n} + \left (
     t_n \frac{y_{\alpha_n}}{x_{\alpha_n}} + \sum_{i = 0}^{n - 1}  (s_i
     g_{\alpha_i} + t_i h_{\alpha_i}) \right) . \]
  Since $x_{\alpha_n}$ is irrational, we have that $s_n -
  \frac{t_n}{x_{\alpha_n}} \neq 0$. By Lemma \ref{8}, $g$ is $c$-generic and
  hence generic. Thus $\mathrm{dim_H}  (A_0) = 0$ by Lemma \ref{09}. Let $A$ be the subgroup of $\mathbb{R}$ generated by $F(A_0)$. Then $\mathrm{dim_H}  (A) = 0$. For any pair $ (x, y) \in [0,1] \times [0,1]$ with $x \not\in \mathbb{Q}$, there are members $g, h \in
  A$ so that $h = \frac{y - g}{x}$ and so $g + x \cdot h = y$. So $\mathbb{R}\backslash \mathbb{Q} \subseteq X_A$. By Proposition \ref{5}(ii), $\mathbb{Q} \cap X_A= \emptyset$.
\end{proof}

\begin{question} (i) Can $\mathrm{CH}$ in Theorem \ref{main}  be
  removed?

  (ii) Can $A$ in Theorem \ref{main} be Borel? 
\end{question}

The complexity of $A$ in Theorem \ref{main} is not evident from the construction. The following proposition shows that at least it is impossible to be $F_\sigma$.

\begin{proposition}\label{48} Suppose $A \subseteq \mathbb{R}$ is $F_{\sigma}$ and null,\footnote{$A$ is not necessarily a group.} then there is an irrational $x$ such that $A+xA$ is null. Moreover, the set $\{x  :  A+xA \text{ } \mathrm{null}\}$ is comeager.\footnote{A \textit{comeager} set is the complement of a set of the first category.}
\end{proposition}

Let $\mu$ be the Lebesgue measure on $\mathbb{R}$. The following fact is clear.

\begin{fact}\label{lem: basic addition}
Let $\{I_i\}_{i\leq n}$ be a finite set of open intervals and $J$ an open interval, then $$\mu((\bigcup_{i\leq n}I_i)+J)\leq n\mu(J)+\sum_{k\leq n}\mu(I_i).$$
\end{fact}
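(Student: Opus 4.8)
The plan is to reduce the Minkowski sum of a union to a union of Minkowski sums, handle a single interval at a time, and then invoke finite subadditivity of Lebesgue measure. The whole statement is then a one-line consequence, so the work lies entirely in recording the two elementary preliminary observations in the right order.

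\emph{First}, I would record the distributive identity
\[
\left(\bigcup_{i\leq n} I_i\right) + J = \bigcup_{i\leq n}\left(I_i + J\right),
\]
which holds for the Minkowski sum against any fixed set $J$: unwinding the definition, a point of the left-hand side is $x+y$ with $x\in I_i$ for some $i$ and $y\in J$, which is exactly a point of some $I_i+J$, and conversely. \emph{Second}, I would treat the one-interval case, which is the only place where the interval structure is genuinely used. If $I=(a,b)$ and $J=(c,d)$ are open intervals, then $I+J=(a+c,\,b+d)$ is again an open interval, so $\mu(I+J)=(b+d)-(a+c)=\mu(I)+\mu(J)$. For general sets no such clean additive formula is available, so this step is where the hypothesis that each $I_i$ and $J$ is an interval pays off.

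\emph{Finally}, combining these with finite subadditivity of Lebesgue measure yields
\[
\mu\!\left(\left(\bigcup_{i\leq n} I_i\right)+J\right)
= \mu\!\left(\bigcup_{i\leq n}(I_i+J)\right)
\leq \sum_{i\leq n}\mu(I_i+J)
= \sum_{i\leq n}\bigl(\mu(I_i)+\mu(J)\bigr)
= n\,\mu(J)+\sum_{i\leq n}\mu(I_i),
\]
which is the claimed bound (reading the coefficient of $\mu(J)$ as the number of intervals in the family, according to whether the index set $\{i:i\leq n\}$ is taken to start at $0$ or $1$).

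I do not expect any real obstacle here: the inequality follows immediately once the distributive identity and the single-interval length formula are in place. The only point deserving a moment's care is that the translated intervals $I_i+J$ need not be pairwise disjoint, so one must apply subadditivity rather than additivity of $\mu$; but since the assertion is an inequality, this overlap costs nothing and the argument goes through verbatim.
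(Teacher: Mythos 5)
Your argument is correct and is exactly the standard verification the paper has in mind when it declares this fact ``clear'' and gives no proof: distribute the Minkowski sum over the union, use $\mu(I+J)=\mu(I)+\mu(J)$ for open intervals, and apply finite subadditivity. The only wrinkle is the off-by-one in the coefficient of $\mu(J)$ coming from the ambiguous index set $\{i : i\leq n\}$, which you correctly flag and which is resolved by the paper's own application of the fact (a cover $\{I_j\}_{1\leq j\leq k}$ of $k$ intervals yielding the coefficient $k$).
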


\begin{proof}[Proof of Proposition \ref{48}]
Suppose $A = \bigcup_{n \ge 1} A_n$ is an $F_{\sigma}$ null subset of
$\mathbb{R}$ with each $A_n$ compact. 

\begin{lemma}\label{048}
    For each
$m, n \ge 1$, the set $$D_{m, n} = \left\{ x \in \mathbb{R}: \mu
(A_n + x A_n) < \frac{1}{m} \right\}$$ contains  a dense open subset of $\mathbb{R}$.
\end{lemma}

\begin{proof}[Proof of Lemma \ref{048}]
    Given $m, n \ge 1$, we shall show that $D_{m, n}$
contains a dense open set in $(0, 1)$. It is not hard to generalize the proof to show that
$D_{m, n}$ contains a dense open set in $\mathbb{R}$.

Let $\sigma$ be a binary string and $q \in \mathbb{N}$. Fix a
real $x = 0. \sigma 0^q x^{\ast} \in (0, 1)$, where $x^{\ast} \in 2^{\omega}$ is the tail of the binary expansion of $x$. Define $y = 0. x^{\ast} \in (0, 1)$. Then \[ A_n + x A_n \subseteq A_n + \sum_{0 \leqslant i \leqslant | \sigma | - 1} \sigma
   (i) 2^{- i - 1} A_n + 2^{- | \sigma | - q} y A_n   . \]
   The set $A_{n,\sigma}:=A_n+\sum_{0 \leqslant i \leqslant | \sigma | - 1} \sigma
   (i) 2^{- i - 1} A_n$ is compact and 
$$A_{n,\sigma}\subseteq A+\sum_{0 \leqslant i \leqslant | \sigma | - 1} \sigma(i) 2^{- i - 1} A$$ is null by Proposition \ref{5}(i) and Corollary \ref{steinhaus}. So there is a finite open interval cover $\{ I_j \}_{1
\leqslant j \leqslant k}$ of $A_{n,\sigma}$ such that
$\sum_{1 \leqslant j \leqslant k} | I_j | < \frac{1}{2 m}$. Suppose that $A_n \subseteq [- N, N]$ and so  $yA_n \subseteq [- N, N]$. Then if
$2^{- q+1}  Nk < \frac{1}{2m}$, by Lemma \ref{lem: basic addition},  we have
$$\mu (A_n + x A_n) \le\mu (   A_{n,\sigma} + 2^{- | \sigma | - q} y A_n )   \le \sum_{0\leq j\leq k}| I_j|+k 2^{- q-|\sigma|}  2N   < \frac{1}{2m}+\frac{1}{2m}=\frac{1}{m}.$$
Since $\sigma$ is arbitrary, $D_{m,n}$ contains a dense open set in $(0,1)$.
\end{proof}

Using this lemma, the set $\{x  :  \mu(A+xA)=0\}$ is comeager by the Baire category theorem.\end{proof}

Assuming $\mathrm{CH}$, we can also construct a maximal subfield of $\mathbb{R}$ with Hausdorff dimension 0 such that some given real is not in its algebraic closure. Given a set $A \subseteq{\mathbb{R}}$, let $F(A)$ be the field generated by $A$. Denote the relative algebraic closure of $F(A)$ in $\mathbb{R}$ by $\mathrm{acl}_{\mathbb{R}}(A)$.

\begin{fact}[Exchange of Algebraic Closure]\label{047} For a set $A \subseteq \mathbb{R}$ and reals $a$ and $b$, if $b \in \mathrm{acl}_{\mathbb{R}}(A \cup \{a\})$ and $b \notin \mathrm{acl}_{\mathbb{R}}(A)$, then $a \in \mathrm{acl}_{\mathbb{R}}(A \cup \{b\})$.

\end{fact}

\begin{proposition}\label{47}
Assume $\mathrm{CH}$. Given $x$ a transcendental number, there is a subfield $A$ of $\mathbb{R}$
  such that
 
 (i) $\mathrm{dim_H} (A)=0$; and
 
 (ii) $x \notin \mathrm{acl}_{\mathbb{R}}(A)$; and
 
 (iii) for any $y \notin \mathrm{acl}_{\mathbb{R}}(A)$, $x \in \mathrm{acl}_{\mathbb{R}}(A \cup \{y\})$.
\end{proposition}
\begin{proof}
  Fix $\{ y_{\alpha} \}_{\alpha < \aleph_1}$ an enumeration of $\mathbb{R}$.
  Fix $x$ a transcendental number. Define $A = \bigcup_{\alpha < \aleph_1} A_{\alpha}$
  by induction on stages $\alpha < \aleph_1$:
  
  \
  
  \textbf{Stage $0$.} Define $A_0=\mathbb{Q}$. $x \notin \mathrm{acl}_{\mathbb{R}}(A_0)$ since it is transcendental.
  
  \textbf{Stage $\alpha>0$.} Recall that $G$ is the set of generic reals (see Lemma \ref{10}). Suppose by induction that we have a countable field $$B_{\alpha} =
  \bigcup_{\beta < \alpha} A_{\beta} \subseteq G \cup \mathbb{Q}$$ such that $x \notin \mathrm{acl}_{\mathbb{R}}(B_\alpha)$. Choose the least $\gamma < \aleph_1$ such that $y_{\gamma} \notin \mathrm{acl}_{\mathbb{R}}(B_{\alpha})$ and $x \notin \mathrm{acl}_{\mathbb{R}}(B_{\alpha} \cup \{y_{\gamma}\})$ (if such a $\gamma$ does not exist, end the whole construction). And if such a $\gamma$ exists, we say that $\gamma$ \textit{acts} at stage $\alpha$.
Let $I_\alpha$ be the Turing ideal generated by $B_\alpha \cup \{y_\gamma,x\}$.
 Choose a real $g_\alpha$ such that 
 
 (i) $g_\alpha$ is $y$-generic for all $y \in I_\alpha$; and 

 (ii) $g_\alpha \notin \mathrm{acl}_{\mathbb{R}}(B_\alpha \cup\{y_\gamma,x\} )$.
 
 Let $A_\alpha$ be the subfield of $\mathbb{R}$ generated by
$B_{\alpha} \cup \{ g_\alpha, \frac{g_\alpha-x}{y_\gamma}\}$. 

\

We first show that $x \notin \mathrm{acl}_{\mathbb{R}}(A_\alpha)$ if there is an ordinal $\gamma$ which acts at stage $\alpha$. By induction, we assume that $x \notin \mathrm{acl}_{\mathbb{R}}(B_\alpha)$. Let $h_\alpha=\frac{g_\alpha-x}{y_\gamma}$. Then $x\in \mathrm{acl}_{\mathbb{R}}(B_\alpha \cup \{g _\alpha ,h_\alpha\})$ but $x \notin \mathrm{acl}_{\mathbb{R}}(B_\alpha \cup \{g _\alpha \})$ ($x \notin \mathrm{acl}_{\mathbb{R}}(B_\alpha)$ by the induction hypothesis, and $g_\alpha \notin \mathrm{acl}_{\mathbb{R}}(B_\alpha \cup\{x\} )$ by its choice, then by Fact \ref{047}, $x \notin \mathrm{acl}_{\mathbb{R}}(B_\alpha \cup \{g _\alpha \})$). So $h_\alpha \in \mathrm{acl}_{\mathbb{R}}(B_\alpha \cup \{g _\alpha,x \})$ by Fact \ref{047}. Then $y_\gamma \in \mathrm{acl}_{\mathbb{R}}(B_\alpha \cup \{g _\alpha,x \})$. And by Fact \ref{047} again, since $y_\gamma \notin \mathrm{acl}_{\mathbb{R}}(B_\alpha)$, we have $y_\gamma \notin \mathrm{acl}_{\mathbb{R}}(B_\alpha \cup \{x\})$. So by Fact \ref{047} again, $g_\alpha \in \mathrm{acl}_{\mathbb{R}}(B_\alpha \cup \{y_\gamma,x\})$, a contradiction.

By Lemma \ref{8}, $A_{\alpha} \subseteq G \cup \mathbb{Q}$ for all $\alpha$. So the induction hypothesis holds at any stage before the construction ends.

Now we define $A = \bigcup_{\alpha < \aleph_1} A_{\alpha}$ and verify that $A$ satisfies the conditions in the statement. We may assume the construction does not end at any stage $\alpha <\aleph _1$ (otherwise $A$ is countable and it is clear that the conditions hold). Since $A = \bigcup_{\alpha < \aleph_1} A_{\alpha} \subseteq G \cup \mathbb{Q}$, $\mathrm{Dim_H} (A)=0$. If $y\notin \mathrm{acl}_{\mathbb{R}}(A)$, choose $\gamma < \aleph_1$ such that $y=y_\gamma$. Then $x \in \mathrm{acl}_{\mathbb{R}}(A\cup \{y\})$. Otherwise, $\gamma$ acts at some stage $\alpha < \aleph_1$. So $g_\alpha$, $\frac{g_\alpha-x}{y}$ and $y$ are in $\mathrm{acl}_{\mathbb{R}}(A\cup \{y\})$. Then $x$ is also in $\mathrm{acl}_{\mathbb{R}}(A\cup \{y\})$, a contradiction.
\end{proof}

  \bigskip

\end{document}